\documentclass[12pt]{amsart}
\usepackage{amsmath,amssymb,amsbsy,amsfonts,latexsym,amsopn,amstext,
                                               amsxtra,euscript,amscd,bm,cite}
\usepackage{url}
\usepackage[colorlinks,linkcolor=blue,anchorcolor=blue,citecolor=blue,backref=page]{hyperref}
\usepackage{color}
\usepackage{float}

\hypersetup{breaklinks=true}

\usepackage[norefs,nocites]{refcheck}
\begin{document}

\newtheorem{theorem}{Theorem}
\newtheorem{lemma}[theorem]{Lemma}
\newtheorem{claim}[theorem]{Claim}
\newtheorem{cor}[theorem]{Corollary}
\newtheorem{prop}[theorem]{Proposition}
\newtheorem{definition}{Definition}
\newtheorem{question}[theorem]{Question}
\newtheorem{remark}[theorem]{Remark}
\newcommand{\hh}{{{\mathrm h}}}

\numberwithin{equation}{section}
\numberwithin{theorem}{section}
% \numberwithin{remark}{section}
\numberwithin{table}{section}

\def\sssum{\mathop{\sum\!\sum\!\sum}}
\def\ssum{\mathop{\sum\ldots \sum}}
\def\iint{\mathop{\int\ldots \int}}

\def\squareforqed{\hbox{\rlap{$\sqcap$}$\sqcup$}}
\def\qed{\ifmmode\squareforqed\else{\unskip\nobreak\hfil
\penalty50\hskip1em\null\nobreak\hfil\squareforqed
\parfillskip=0pt\finalhyphendemerits=0\endgraf}\fi}%%

%  use the AMS-Euler Fraktur fonts
%%%%%%%%%%%%%%%%%%%%%%%%%%%%%%%%%%
\newfont{\teneufm}{eufm10}
\newfont{\seveneufm}{eufm7}
\newfont{\fiveeufm}{eufm5}
%%%%%%%%%%%%%%%%%%%%%%%%%%%%%%%%%
%
%  allow automatic size selection in math mode
%
%%%%%%%%%%%%%%%%%%%%%%%%%%%%%%%%%
\newfam\eufmfam
     \textfont\eufmfam=\teneufm
\scriptfont\eufmfam=\seveneufm
     \scriptscriptfont\eufmfam=\fiveeufm
%%%%%%%%%%%%%%%%%%%%%%%%%%%%%%%%%
%
%  \frak works on a single symbol at a time...
%
\def\frak#1{{\fam\eufmfam\relax#1}}

\newcommand{\bflambda}{{\boldsymbol{\lambda}}}
\newcommand{\bfmu}{{\boldsymbol{\mu}}}
\newcommand{\bfxi}{{\boldsymbol{\xi}}}
\newcommand{\bfrho}{{\boldsymbol{\rho}}}

\newcommand{\bfalpha}{{\boldsymbol{\alpha}}}
\newcommand{\bfbeta}{{\boldsymbol{\beta}}}
\newcommand{\bfphi}{{\boldsymbol{\varphi}}}
\newcommand{\bfpsi}{{\boldsymbol{\psi}}}
\newcommand{\bftheta}{{\boldsymbol{\vartheta}}}

\def\fK{Frak K}
\def\fT{Frak{T}}

\def\fA{{Frak A}}
\def\fB{{Frak B}}
\def\fC{\mathfrak{C}}

\def \balpha{\bm{\alpha}}
\def \bbeta{\bm{\beta}}
\def \bgamma{\bm{\gamma}}
\def \blambda{\bm{\lambda}}
\def \bchi{\bm{\chi}}
\def \bphi{\bm{\varphi}}
\def \bpsi{\bm{\psi}}

\def\eqref#1{(\ref{#1})}

\def\vec#1{\mathbf{#1}}

%\def\squareforqed{\hbox{\rlap{$\sqcap$}$\sqcup$}}
%\def\qed{\ifmmode\squareforqed\else{\unskip\nobreak\hfil
%\penalty50\hskip1em\null\nobreak\hfil\squareforqed
%\parfillskip=0pt\finalhyphendemerits=0\endgraf}\fi}

%%%%%%%%%%%%%%%%%%%%%%%%%
% Alphabet calligraphie %
%%%%%%%%%%%%%%%%%%%%%%%%%
\def\cA{{\mathcal A}}
\def\cB{{\mathcal B}}
\def\cC{{\mathcal C}}
\def\cD{{\mathcal D}}
\def\cE{{\mathcal E}}
\def\cF{{\mathcal F}}
\def\cG{{\mathcal G}}
\def\cH{{\mathcal H}}
\def\cI{{\mathcal I}}
\def\cJ{{\mathcal J}}
\def\cK{{\mathcal K}}
\def\cL{{\mathcal L}}
\def\cM{{\mathcal M}}
\def\cN{{\mathcal N}}
\def\cO{{\mathcal O}}
\def\cP{{\mathcal P}}
\def\cQ{{\mathcal Q}}
\def\cR{{\mathcal R}}
\def\cS{{\mathcal S}}
\def\cT{{\mathcal T}}
\def\cU{{\mathcal U}}
\def\cV{{\mathcal V}}
\def\cW{{\mathcal W}}
\def\cX{{\mathcal X}}
\def\cY{{\mathcal Y}}
\def\cZ{{\mathcal Z}}
\newcommand{\rmod}[1]{\: \text{mod} \: #1}

\def\cg{{\mathcal g}}

\def\vr{\mathbf r}

\def\e{{\mathbf{\,e}}}
\def\ep{{\mathbf{\,e}}_p}
\def\em{{\mathbf{\,e}}_m}

\def\Tr{{\mathrm{Tr}}}
\def\Nm{{\mathrm{Nm}\,}}

 \def\SS{{\mathbf{S}}}

\def\lcm{{\mathrm{lcm}}}
\def\ord{{\mathrm{ord}}}

\def\({\left(}
\def\){\right)}
\def\fl#1{\left\lfloor#1\right\rfloor}
\def\rf#1{\left\lceil#1\right\rceil}

\def\mand{\qquad \text{and} \qquad}

\newcommand{\commM}[1]{\marginpar{%
\begin{color}{red}
\vskip-\baselineskip %raise the marginpar a bit
\raggedright\footnotesize
\itshape\hrule \smallskip M: #1\par\smallskip\hrule\end{color}}}

\newcommand{\commI}[1]{\marginpar{%
\begin{color}{magenta}
\vskip-\baselineskip %raise the marginpar a bit
\raggedright\footnotesize
\itshape\hrule \smallskip I: #1\par\smallskip\hrule\end{color}}}

\newcommand{\commK}[1]{\marginpar{%
\begin{color}{blue}
\vskip-\baselineskip %raise the marginpar a bit
\raggedright\footnotesize
\itshape\hrule \smallskip K: #1\par\smallskip\hrule\end{color}}}

%%%%%%%%%%%%%%%%%%%%%%%%%%%%%%%%%%%%%%%%%%%%%%%%%%%%%%%%
%%%%%%%%%%%%%%%%%%%%%%%%%%%%%%%%%%%%%%%%%%%%%%%%%%%%%%%%
%%%%%%%%%%%%%%%%%%%%%%%%%%%%%%%%%%%%%%%%%%%%%%%%%%%%%%%%
%%%%%%%%%%%%%%%%%%%%%%%%%%%%%%%%%%%%%%%%%%%%%%%%%%%%%%%%

%%%%%%%  END OF STANDARD STUFF %%%%%%%%%

%%%%%%%%%%%%%%%%%%%%%%%%%%%%%%%%%%%%%%%%%%%%%%%%%%%%%%%%
%%%%%%%%%%%%%%%%%%%%%%%%%%%%%%%%%%%%%%%%%%%%%%%%%%%%%%%%
%%%%%%%%%%%%%%%%%%%%%%%%%%%%%%%%%%%%%%%%%%%%%%%%%%%%%%%%
%%%%%%%%%%%%%%%%%%%%%%%%%%%%%%%%%%%%%%%%%%%%%%%%%%%%%%%
%%%%%%%%%%%
%%% Spell

\hyphenation{re-pub-lished}

\mathsurround=1pt

\def\bfdefault{b}
\overfullrule=5pt

\def \F{{\mathbb F}}
\def \K{{\mathbb K}}
\def \N{{\mathbb N}}
\def \Z{{\mathbb Z}}
\def \Q{{\mathbb Q}}
\def \R{{\mathbb R}}
\def \C{{\mathbb C}}
\def\Fp{\F_p}
\def \fp{\mathfrak p}
\def \fq{\mathfrak q}

\def\ZK{\Z_K}

\def \xbar{\overline x}
\def\e{{\mathbf{\,e}}}
\def\ep{{\mathbf{\,e}}_p}
\def\eq{{\mathbf{\,e}}_q}

%\title[Congruences with exponential polynomials]{Congruences with exponential polynomials}

\title[Squarefree  and squarefull integers in   progressions]{Smooth squarefree  and squarefull integers in arithmetic progressions}
\date{\today}

\author[M. Munsch]{Marc Munsch}
\address{5010 Institut f\"{u}r Analysis und Zahlentheorie
8010 Graz, Steyrergasse 30, Graz}
\email{munsch@math.tugraz.at}

 \author[I. E. Shparlinski] {Igor E. Shparlinski}

\address{Department of Pure Mathematics, University of New South Wales,
Sydney, NSW 2052, Australia}
\email{igor.shparlinski@unsw.edu.au}

\author[K. H. Yau]{Kam Hung Yau}

\address{Department of Pure Mathematics, University of New South Wales,
Sydney, NSW 2052, Australia}
\email{kamhung.yau@unsw.edu.au}

\begin{abstract}   
We obtain new lower bounds on the number of smooth squarefree integers up to $x$
in residue classes modulo a prime $p$, relatively large compared to $x$, which in some ranges of $p$ and $x$ improve 
that of A. Balog and C.  Pomerance (1992). We also obtain an estimate on 
the smallest squarefull number in almost all residue classes modulo a prime $p$.
\end{abstract}

\keywords{Arithmetic progressions, character sums, smooth numbers, squarefree numbers, squarefull numbers.}
\subjclass[2010]{Primary 11N25; Secondary 11B25, 11L05, 11L40}

\maketitle

\section{Introduction}

\subsection{Background and motivation}

Let $p$ be a prime. For any integer $n \ge 2$ we denote $P^+(n)$ to be the largest prime factor of $n$. For any positive real number $y$, we say that an integer is $y$-smooth if $P^+(n) \le y$.

Studying the distribution of $y$-smooth numbers $n \le x$ in progressions modulo an 
integer $q\ge 2$ has always been a very active subject of research, see~\cite{BaPom,Drap,Harp,MuSh,Sound} and references therein. For instance, as pointed out in ~\cite{Sound}, a very good level of distribution would imply the truth of Vinogradov's conjecture about the smallest quadratic non-residue.

As usual, we denote by $\psi(x,y;p,a)$  the number of positive integers $n \le x$ 
which are $y$-smooth and satisfy $n \equiv a \bmod{p}$. Furthermore, we use $\psi^\sharp(x,y;p,a)$ for the number of those integers which are also squarefree.
%to denote the number positive integers $n \le x$ 
%which are square-free, $y$-smooth and satisfy $n \equiv a \bmod{p}$.

Due to its link with Euclidean prime generators, the positivity of $\psi^\sharp(x,y;p,a)$  in the special case of $y=p$ is of special interest, see~\cite{BoPom}. Thus, following  Booker and Pomerance~\cite{BoPom}, 
we   use $M(p)$ to
 denote  the least $x$ such that  $\psi^\sharp(x,p;p,a)>0$ for every integer $a$. The quantity $M(p)$  has been considered in~\cite{MuSh}, where in particular the 
conjecture of Booker and Pomerance~\cite{BoPom} that $M(p)= p^{O(1)}$ is established in 
a stronger form
$$
M(p) \le p^{3/2+o(1)}, 
$$
for all primes $p$,  and 
$$
M(p) \le p^{4/3+o(1)},
$$
for all, but a set of primes $p$ of relative zero density.

Here we use similar ideas to obtain lower bounds on $\psi^\sharp(x,y;p,a)$ 
of essentially the right order of magnitude in a broader range of $y$. 
These bounds, even without taking into account the squarefreeness condition, 
that is, using
$$
\psi(x,y;p,a) \ge \psi^\sharp(x,y;p,a), 
$$
improve the range in which the result of Balog and  Pomerance~\cite{BaPom}
applies. 

Subsequently, we also address a question about squarefull 
numbers in arithmetic progressions (that is numbers, which are divisible by
squares of all their prime divisors). 
This question is significantly less studied, see however~\cite{Chan,Mun,MuTr}.
In particular, Chan~\cite{Chan} obtained an asymptotic formula for the number of squarefull 
numbers in an arithmetic progression, however, due to a rather complicated structure of the 
main term, it is not immediately clear when the main term starts to exceed the error term. Here we consider a Linnik-type version of this question. 
Namely, using very different arguments compared 
to the case of squarefree numbers (and also to~\cite{Chan}), we investigate the quantity $F(a,p)$ 
which is defined as the smallest positive squarefull number $n \equiv a \bmod p$.

We note that the question about squarefull numbers in arithmetic progressions is dual to the question
on squarefull, and more generally $k$-full numbers (that is, numbers divisible by $k$-th power of all their prime divisors)
 in short intervals, which has been considered 
in~\cite{KoLu,KoLuSh}.  In particular, it is shown in~\cite[Theorem~1]{KoLuSh} that infinitely many intervals 
of the form $(N^{k}, (N+1)^k)$ contain at least 
$$
M \ge \( \(\frac{3}{8} + o(1)\) \frac{\log N}{\log \log N}\)^{1/3}
$$
$k$-full integers (but of course no perfect $k$-th powers).  Here we use an opportunity to present in Appendix~\ref{app:A} an 
argument of V.~Blomer which allows to replace $1/3$ with $1/2$ in the lower bound on $M$. 

\subsection{Results for squarefree numbers}

We  start with a lower bound on  $\psi^\sharp(x,y;p,a)$
which holds for any prime $p$.

\begin{theorem} \label{thm: smooth sqfr}
For any fixed real numbers $\alpha$ and $\beta$ with $\beta  \in (23/24,1]$ and $\alpha \in (9/2 -3\beta,  3\beta]$,
for $x = p^{\alpha+ o(1)}$ and $y=p^{\beta+ o(1)}$ we have
$$
 \psi^\sharp(x,y;p,a) \ge x^{1+o(1)}/p
$$
as $p \rightarrow \infty$.
\end{theorem}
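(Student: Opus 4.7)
The plan is to use Dirichlet characters modulo $p$. For $\gcd(a,p)=1$, the orthogonality relations give
$$
\psi^\sharp(x,y;p,a) = \frac{S}{p-1} + \frac{1}{p-1} \sum_{\chi \ne \chi_0} \bar\chi(a)\, T(\chi),
$$
where $S$ denotes the total number of $y$-smooth squarefree integers up to $x$, the sum runs over non-principal characters modulo $p$, and $T(\chi) = \sum \chi(n)$ runs over the same integers counted by $S$. Since $\beta > 23/24$ the smoothness parameter $y = p^{\beta + o(1)}$ lies essentially at the size of $p$, so a standard Mertens-type argument yields $S = x^{1 + o(1)}$ throughout the range $\alpha \le 3\beta$; the main term therefore contributes $x^{1+o(1)}/p$, which already matches the target lower bound. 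The case $p \mid a$ is handled trivially: for $\beta < 1$ any $y$-smooth integer divisible by $p$ exceeds $y \ge p$, while for $\beta = 1$ the contribution is negligible by direct counting.

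The heart of the argument is to show
$$
\sum_{\chi \ne \chi_0} |T(\chi)| = o(x)
$$
in the stipulated ranges. Following the strategy developed in~\cite{MuSh}, each $y$-smooth squarefree $n \le x$ is decomposed as $n = n_1 n_2 n_3$ with pairwise coprime $y$-smooth squarefree factors in dyadic intervals $[N_i, 2N_i]$, with $N_1 N_2 N_3 \asymp x$. The condition $\alpha \le 3 \beta$, that is $x \le y^3$, guarantees that such a tripartition exists; the coprimality restriction, needed to preserve squarefreeness under multiplication, is removed by inclusion--exclusion on common prime divisors at a cost of $p^{o(1)}$. Using multiplicativity of $\chi$ one factors the resulting triple sum and applies the Burgess bound
$$
\Big| \sum_{M < m \le 2M} \chi(m) \Big| \ll M^{1-1/r} p^{(r+1)/(4r^2) + o(1)}, \qquad r \ge 1,
$$
to the variable with smallest range $N_i$, treating the other two sums trivially. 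The parameter $r$ and the dyadic split $(N_1, N_2, N_3)$ are then jointly optimized.

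The principal obstacle is precisely this optimization. The Burgess gain on a single factor must outweigh both the trivial estimates on the remaining two factors and the $1/p$ loss from summing over non-principal characters. The constraint $\beta > 23/24$ is forced by the need for $y$ to be close enough to $p$ that the $y$-smooth squarefree integers up to $x$ retain their full expected density $x^{1+o(1)}$ throughout the admissible range of $\alpha$, while the threshold $\alpha > 9/2 - 3\beta$ is exactly the numerical inequality under which, at the optimal choice of $r$ and $(N_1, N_2, N_3)$, the Burgess saving on the smallest factor beats the trivial bound $x/p$. A careful verification that the dyadic losses and the coprimality correction absorb only $p^{o(1)}$ factors then completes the proof.
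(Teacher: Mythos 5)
Your proposal follows a genuinely different route from the paper, but it contains a gap that the stated plan cannot close. The paper does not use multiplicative characters or the Burgess bound at all: it sets $L=p^{(\alpha-\beta)/2-\varepsilon/2}$, $h=p^\beta$, and counts squarefree solutions of $\ell_1\ell_2 u\equiv a\bmod p$ with primes $\ell_1,\ell_2\in[L,2L]$ and $u\le h$ via Lemma~\ref{lem:congr-asymp-sf}. The analytic input is the bilinear Kloosterman-type estimate of Lemma~\ref{lem:BilinSums} on $\sum_{\ell_1,\ell_2\in\cL}\ep(a\overline{\ell_1\ell_2})$, which delivers the expected count $K^2h/p$ directly, with no $p$-sized loss from summing over all residue classes. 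Your plan, by contrast, expands in Dirichlet characters and must therefore make $\frac{1}{p-1}\sum_{\chi\neq\chi_0}|T(\chi)|$ smaller than the main term $\asymp x/p$; since $T(\chi)$ is a sum of length about $x$, this requires saving a full factor of $p$ (up to $p^{o(1)}$) on every individual $|T(\chi)|$.

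That saving is not available from the plan as written. If you factor $n=n_1n_2n_3$, apply the Burgess bound with parameter $r$ to one factor over an interval of length $N_i$, and bound the other two trivially, the gain is $N_i^{1/r}p^{-(r+1)/(4r^2)}$; to make this exceed $p$ you need $N_i>p^{\,r+(r+1)/(4r)}$, which is at least $p^{3/2}$ for every $r\ge 1$. But to apply Burgess the variable must run over a contiguous interval, hence $n_i$ must be unconstrained integers in $[N_i,2N_i]$, which forces $N_i\le y\le p$ if the $y$-smoothness of the product is to be automatic. (If $N_i>y$, the $y$-smooth squarefree $n_i$ no longer fill an interval and Burgess no longer applies.) Thus the two requirements $N_i>p^{3/2}$ and $N_i\le p$ are incompatible, and the argument collapses. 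Applying Burgess to all three factors instead gives $|T(\chi)|\ll x^{1-1/r}p^{3(r+1)/(4r^2)+o(1)}$, which is $o(x/p)$ only when $\alpha>r+3(r+1)/(4r)\ge 5/2$, again outside the stated range. Indeed a character-sum/Burgess approach of this type is essentially Balog--Pomerance and only reaches $\alpha>3/4+\beta$; the whole point of Theorem~\ref{thm: smooth sqfr} is to improve on this to $\alpha>9/2-3\beta$ when $\beta>23/24$, so a correctly executed version of your plan would still fall short. The assertion that the threshold $9/2-3\beta$ ``is exactly the numerical inequality under which \ldots the Burgess saving beats the trivial bound'' is not computed in your write-up, and the computation above shows it is false. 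The missing idea is to replace the character expansion and Burgess by the exponential-sum count of products $\ell_1\ell_2 u$ in a progression (the paper's Lemmas~\ref{lem:BilinSums}--\ref{lem:congr-asymp-sf}), which avoids the $p$-loss at the source.

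As a minor point, the theorem implicitly assumes $\gcd(a,p)=1$ (all lemmas in the paper are stated under this hypothesis, and for $\beta<1$ there are simply no $y$-smooth integers divisible by $p$); your remark that the case $p\mid a$ ``is handled trivially'' should instead say that it is excluded.
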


Taking $y=p^{\beta}$ with $23/24<\beta \le 1$ and $q=p$ in the main result of Balog \& Pomerance~\cite{BaPom} gives the existence of a $p^{\beta}$-smooth integer (not necessary squarefree) $n \le p^{\max \{ 3\beta /2, 3/4 +\beta \} +o(1)} =p^{3/4 +\beta  +o(1)} $ since $\beta \le 1$. We notice that%Hence Theorem~\ref{thm: smooth sqfr} gives a stronger bound if 
$$
9/2 - 3\beta  < 3/4 +\beta,
$$ under the condition $23/24 < \beta$. Therefore, Theorem~\ref{thm: smooth sqfr} always improves on the bound given by the main result of Balog \& Pomerance \cite{BaPom}. We remark that removing the squarefreeness condition does not help us to improve on Theorem~\ref{thm: smooth sqfr} due to the method used.

We also obtain a result for almost all primes. Firstly, we define the interval
$$
\cI(\beta )  = \(\alpha_0(\beta), \beta+1\)
$$
where 
$$
\alpha_0(\beta) =
\begin{cases}
5 (2- \beta )/3  & \text{ if $\beta \in (7/8, 13/14]$,} \\
 12-11\beta   & \text{ if $\beta \in (13/14, 17/18]$,} \\
(7- 4\beta)/2   & \text{ if $\beta \in (17/18, 25/26 ]$,} \\
  16-15\beta   & \text{ if $\beta \in (25/26, 31/32]$,} \\
 (18-11\beta)/5  & \text{ if $\beta \in (31/32, 41/42]$,} \\
 20-19\beta  & \text{ if $\beta \in (41/42, 49/50]$,} \\
(11-7\beta)/3  & \text{ if $\beta \in (49/50, 61/62]$,} \\
 24-23\beta  & \text{ if $\beta \in ( 61/62, 68/69]$,} \\
 4/3  &\text{ if $\beta \in (68/69,1]$.}
\end{cases}
$$
%$$
%\cI(\beta ) =
%\begin{cases}
%(\frac{5}{3} (2- \beta ) ,\beta+1) & \text{ if $\beta \in (7/8, 13/14]$,} \\
%( 12-11\beta , \beta +1) & \text{ if $\beta \in (13/14, 17/18]$,} \\
%( \frac{1}{2}(7- 4\beta) , \beta +1) & \text{ if $\beta \in (17/18, 25/26 ]$,} \\
%( 16-15\beta , \beta +1) & \text{ if $\beta \in (25/26, 31/32]$,} \\
%(\frac{1}{5}(18-11\beta) , \beta +1) & \text{ if $\beta \in (31/32, 41/42]$} \\
%(20-19\beta, \beta +1) & \text{ if $\beta \in (41/42, 49/50]$} \\
%(\frac{1}{3}(11-7\beta), \beta +1 ) & \text{ if $\beta \in (49/50, 61/62]$} \\
%(24-23\beta, \beta +1) & \text{ if $\beta \in ( 61/62, 68/69]$} \\
%(4/3, \beta +1) &\text{ if $\beta \in (68/69,1]$.}
%\end{cases}
%$$

\begin{theorem} \label{thm:smooth sqfr almost all}
Fix real numbers $\alpha$ and $\beta$ such that $ \beta \in (7/8,1]$ and $\alpha \in \cI (\beta) $. Letting $x=Q^{\alpha +o(1)}$ and $y=Q^{\beta+o(1)}$,
as $Q \rightarrow \infty$, we have 
$$
\psi^\sharp(x,y;p,a) \ge x^{1+o(1)}/p
$$
 for all but $o(Q / \log Q)$ primes $p \in [Q, 2Q]$.
\end{theorem}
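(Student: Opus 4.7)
The plan is to adapt the approach behind Theorem~\ref{thm: smooth sqfr} by averaging high moments of character sums over $p \in (Q, 2Q]$. Starting from the multiplicative character expansion
$$
\psi^\sharp(x, y; p, a) = \frac{1}{p-1} \sum_{\chi \bmod p} \bar\chi(a) \Psi^\sharp_\chi(x, y), \qquad \Psi^\sharp_\chi(x, y) = \sum_{\substack{n \le x \\ n \text{ sqfr},\ P^+(n) \le y}} \chi(n),
$$
the principal character $\chi_0$ gives $\Psi^\sharp_{\chi_0}(x, y) = \psi^\sharp(x, y)(1 + o(1))$. Since $\alpha < \beta + 1$ and $\beta > 7/8$, the ratio $u = \log x/\log y$ stays bounded, so Dickman--de~Bruijn-type estimates for smooth squarefree integers give $\psi^\sharp(x, y) = x^{1+o(1)}$, yielding the required main term of size $x^{1+o(1)}/p$. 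The task therefore reduces to establishing that
$$
E(p) := \sum_{\chi \ne \chi_0 \bmod p} \bigl|\Psi^\sharp_\chi(x, y)\bigr| = o\bigl(x^{1-o(1)}\bigr)
$$
for all but $o(Q/\log Q)$ primes $p \in (Q, 2Q]$.

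To control the exceptional set, I would estimate, for suitable integers $k \ge 1$, the averaged $2k$-th moment
$$
W_k(Q) = \sum_{Q < p \le 2Q}\ \sum_{\chi \ne \chi_0 \bmod p} \bigl|\Psi^\sharp_\chi(x, y)\bigr|^{2k},
$$
since Chebyshev's inequality converts any bound of the shape $W_k(Q) \ll Q\, x^{2k} Q^{-\eta}$ with $\eta > 0$ into the desired $o(Q/\log Q)$ bound on the exceptional set. To estimate $W_k(Q)$, I would decompose each squarefree $y$-smooth $n$ as $n = P^+(n)\, n'$ with $P^+(n) \in [M, 2M]$ for a dyadic scale $M \le y$; this partitions $\Psi^\sharp_\chi$ into bilinear forms
$$
\Psi^\sharp_\chi(x, y) = \sum_{M \text{ dyadic}} \sum_{M < m \le 2M} \chi(m) \sum_{\substack{n' \le x/m \\ n' \text{ sqfr},\ P^+(n') < m}} \chi(n'),
$$
with $m$ restricted to primes. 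After applying H\"older's inequality in the outer variable $m$ and invoking the Heath-Brown multiplicative large-sieve inequality for $2k$-th moments of character sums twisted by the inner coefficients, one obtains an estimate of the form
$$
W_k(Q) \ll \bigl(Q^2 + (x/M)^k\bigr)\, M^{2k - 1} (x/M)^k \, (Qx)^{o(1)}.
$$

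The final step, and the source of the complicated shape of $\alpha_0(\beta)$, is a case-by-case optimisation of the scale $M \in [1, y]$ and the integer $k$, chosen so that the above beats the target $Q\, x^{2k}Q^{-\eta}$. As $\beta$ moves from $7/8$ towards $1$, the admissible range for $M = Q^\mu$ widens and larger values of $k$ become optimal; each breakpoint in the list $\{13/14, 17/18, 25/26, 31/32, 41/42, 49/50, 61/62, 68/69\}$ marks the threshold at which the next integer value of $k$ takes over, and substituting that $k$ into the balance between the diagonal term $Q^2$ and the off-diagonal term $(x/M)^k$ produces one linear piece in the definition of $\alpha_0(\beta)$, with continuity at each transition verified by the matching computations. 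Carrying through this optimisation uniformly in all nine regimes, and checking that the resulting bound beats the target over the whole range $\beta \in (7/8, 1]$, will be the principal technical burden of the proof.
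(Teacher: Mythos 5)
Your proposal follows a genuinely different route from the paper, but the route as sketched has a fundamental obstruction that prevents it from reaching the stated range of $\alpha$.

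You expand $\psi^\sharp(x,y;p,a)$ in Dirichlet characters and try to show $E(p)=\sum_{\chi\ne\chi_0}|\Psi^\sharp_\chi(x,y)|=o(x)$ for almost all $p$ by averaging high moments. The trouble is that there are $\approx p$ non-principal characters modulo $p$, and the best one can generically hope for from a single incomplete character sum of length $x$ is square-root cancellation, $|\Psi^\sharp_\chi|\approx x^{1/2}$. Summing over all characters then gives a contribution of order $p\,x^{1/2}$, which is $\gg x$ as soon as $p \gg x^{1/2}$, i.e.\ whenever $\alpha<2$. But the whole point of Theorem~\ref{thm:smooth sqfr almost all} is to reach $\alpha$ as small as $\alpha_0(\beta)\ge 4/3$, well below $2$. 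The large-sieve/moment averaging does not rescue this: applying the multiplicative large sieve for a single modulus $p$ gives $\sum_{\chi}|\Psi^\sharp_\chi|^2\ll (p+x)\,x$, hence by Cauchy--Schwarz $E(p)\ll\sqrt{p(p+x)x}\gg x$ for every $p\ge 2$. Your proposed moment bound $W_k(Q)\ll(Q^2+(x/M)^k)M^{2k-1}(x/M)^k(Qx)^{o(1)}$ is asserted rather than derived, and when one tries to convert it into a Chebyshev-type bound on the exceptional set of primes, the $Q^{2k-1}$ loss from H\"older over $\chi$ overwhelms the saving unless $x\gg Q^2$. So no choice of $k$ and $M$ closes the argument in the relevant range $\alpha\in(4/3,2)$.

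The paper avoids this obstacle entirely by lower-bounding $\psi^\sharp(x,y;p,a)$ not via the full character expansion but by counting only integers of the special form $\ell_1\ell_2 u$ with $\ell_1,\ell_2$ primes in a dyadic interval $[L,2L]$ and $u\le h$ (with $L^2h\approx x$, so they are automatically $y$-smooth); these contribute $\approx x/(\log L)^2$, still of the right order after dividing by $p$. The congruence $\ell_1\ell_2u\equiv a\bmod p$ is then attacked by completing the $u$-sum and reducing to bilinear Kloosterman-type sums $\sum_{\ell_1,\ell_2}\ep(b\,\overline{\ell_1\ell_2})$, which admit better-than-trivial cancellation on average over $p\in[Q,2Q]$ (Lemmas~\ref{lem: N bound almost all} and~\ref{lem: average N bound}). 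The M\"obius sieve for squarefreeness is split into three ranges of the sieving parameter $d$, and the case analysis over the integer $k$ in Lemma~\ref{lem: N bound almost all} produces the nine-piece formula for $\alpha_0(\beta)$. Your intuition that the breakpoints correspond to incrementing an integer parameter $k$ is correct, but the parameter lives in an exponential-sum estimate over inverses of primes, not in a character-sum large-sieve moment. If you want to salvage a character-sum argument, you would need the coefficients restricted to a bilinear structure with genuine extra cancellation (as in the Kloosterman approach), not the full smooth-squarefree indicator.
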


\subsection{Results for squarefull numbers}

First we observe that if $a$ is a quadratic residue modulo $p$ (or $a = 0$), then 
$a \equiv b^2 \bmod p$ for some integer $b \in [0, p-1]$ and so we have trivially
$F(a,p) \le (p-1)^2$ in this case. 

To estimate $F(a,p)$ for a  quadratic non-residue $a$ we denote
\begin{equation} \label{eq:Burg exp}
\eta_0 = \frac{1}{ 4\sqrt{e}}
\end{equation}
and recall that by the classical bound of Burgess~\cite{Burg} 
on the smallest quadratic non-residue $n_p$ we have  
\begin{equation}
 \label{eq:Burg}
n_p \le p^{\eta}
\end{equation}
 for any $\eta >\eta_0$ and a sufficiently large $p$. Noticing that $an_p^{-3}$ is a quadratic residue modulo $p$, we now obtain $F(a,p) \le n_p^3 (p-1)^2$. Hence, we have the trivial bound  $F(a,p) \le p^{2+3\eta_0 +o(1)} $ for any $a$, which we unfortunately do not know how to improve. However, we remark that assuming the Vinogradov's conjecture that $n_p \le p^{o(1)}$ (which is implied by the Generalised Riemann Hypothesis in the stronger form $n_p \ll \log^2 p$ proved by Ankeny \cite{Ankeny}, see also ~\cite[Section~5.9]{IwKow} for a discussion), we have the bound $F(a,p) \le p^{2+o(1)}$.   Even though we cannot reach such a bound, we obtain an unconditional better bound for almost all $a \in \{0, \ldots, p-1\}$.
%\commI{Expanded this, added a par below and refs. to~\cite{IwKow} and
%to~\cite{SS}}
 
We also note that  from a result on counting 
 squarefull integers~\cite{SS}, for any set $\cA$ of $A$ distinct residues modulo $p$ we have  
 $$
\max_{a \in \cA} F(a,p)  \gg A^2, 
$$
where, as usual, we use $A\ll B$ and $B \gg A$  as  an equivalent to the inequality $|A|\le cB$
with some  constant $c>0$, which occasionally, where obvious, may depend on
the real parameter $\varepsilon>0$. We slightly refine this result:

\begin{theorem} \label{thm:smooth sqfull almost all}
For all but $o(p)$ quadratic non-residues $a \in [0, p-1]$, we have 
$$
p^2n_p f(p) \ll F(a,p) \leq p^{2+\eta_0 +o(1)} 
$$
for any function $f(p)$ such that $f(p) \to 0$ as  $p \rightarrow \infty$
and 
$$
\max_{a \bmod p} F(a,p) \gg p^{2}n_p, 
$$
 where $n_p$ denotes the least quadratic non-residue modulo $p$.  
 \end{theorem}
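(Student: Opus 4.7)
The theorem has three parts: the upper bound $F(a,p)\leq p^{2+\eta_0+o(1)}$ for almost all quadratic non-residues $a$ (hereafter QNR), the lower bound $F(a,p)\gg p^2 n_p f(p)$ for almost all QNR $a$ and any $f(p)\to 0$, and the maximum bound $\max_a F(a,p)\gg p^2 n_p$. The plan is to derive the lower bound and the max from a direct counting argument, and the upper bound from a second-moment argument based on multiplicative character sums.

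For the lower and max bounds, the plan is to count squarefull integers falling in a QNR class modulo $p$. Writing every squarefull $n$ uniquely as $n=a^2b^3$ with $b$ squarefree, if $p\nmid n$ then $n$ is QNR modulo $p$ if and only if $b$ is, which forces $b\geq n_p$. Hence
$$
\#\{n\leq X:\,n\text{ squarefull and QNR modulo }p\}\leq \sum_{\substack{b\geq n_p\\ b\text{ squarefree}}}\lfloor (X/b^3)^{1/2}\rfloor\ll (X/n_p)^{1/2},
$$
using $\sum_{b\geq n_p}b^{-3/2}\ll n_p^{-1/2}$. Consequently $\#\{a\in\text{QNR}:F(a,p)\leq X\}\ll (X/n_p)^{1/2}$. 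Setting $X=cp^2n_pf(p)$ with $f\to 0$ produces $O(p\sqrt{f(p)})=o(p)$ exceptions, giving the lower bound; requiring the count to reach $(p-1)/2$ forces $X\gg p^2 n_p$, giving the max bound.

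For the upper bound, the plan is a variance argument over representations. For parameters $A,B$ to be chosen, set
$$
N(c)=\#\{(a,b):1\leq a\leq A,\ 1\leq b\leq B,\ b\text{ squarefree QNR},\ a^2b^3\equiv c\pmod p\};
$$
any $c$ with $N(c)\geq 1$ satisfies $F(c,p)\leq A^2B^3$. Multiplicative character orthogonality gives
$$
N(c)=\frac{1}{p-1}\sum_\chi \bar\chi(c)\,T_\chi U_\chi,\quad T_\chi=\sum_{a\leq A}\chi^2(a),\quad U_\chi=\sum_{\substack{b\leq B\\ b\text{ sqfree QNR}}}\chi^3(b).
$$
The principal character and the Legendre character (both with $\chi^2=1$) together contribute the mean $\mu=2A|B'|/(p-1)$ on QNR residues, where $|B'|$ is the count of admissible $b$, and supply exactly $|\text{QNR}|\mu^2$ to $\sum_c N(c)^2$; all other $\chi$ contribute an error $E$, and Chebyshev yields $\#\{c\in\text{QNR}:N(c)=0\}\ll E/\mu^2$. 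Choosing $A\asymp p^{1-\eta_0+\varepsilon}$ and $B\asymp p^{\eta_0+\varepsilon}$ gives $A^2B^3=p^{2+\eta_0+O(\varepsilon)}$, and Burgess's inequality bounds $T_\chi$ nontrivially since $1-\eta_0>1/4$; combined with suitable bounds on $U_\chi$, one derives $E=o(p\mu^2)$, so that $F(a,p)\leq p^{2+\eta_0+o(1)}$ holds for all but $o(p)$ QNR $a$ after letting $\varepsilon\to 0$. The Burgess exponent $\eta_0=1/(4\sqrt e)$ enters through the Burgess bound $n_p\leq p^{\eta_0+o(1)}$ used to calibrate the parameters.

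The main obstacle is controlling $E$ in the regime where $B\asymp p^{\eta_0+\varepsilon}$ lies below the Burgess threshold $p^{1/4+\varepsilon}$ (which happens because $\eta_0<1/4$), so that no direct Burgess bound on $U_\chi$ is available. The proof must either expand the QNR indicator in $U_\chi$ as $\frac{1}{2}(1-\chi_L(b))$, with $\chi_L$ the Legendre character, to recast $U_\chi$ as a combination of character sums of higher order over all squarefree $b\leq B$, or else exploit the density of squarefree QNR near $n_p$ afforded by the Burgess smallness of $n_p$, in such a way that the aggregate $T_\chi U_\chi$ contribution remains controllable on average over $\chi$. The remaining steps are routine bookkeeping to verify $\mu\to\infty$, $E=o(p\mu^2)$, and the final bound $A^2B^3=p^{2+\eta_0+o(1)}$.
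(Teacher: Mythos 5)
Your proof of the lower bound and of $\max_a F(a,p)\gg p^2 n_p$ is essentially the paper's: you parametrize squarefull numbers as $n=a^2b^3$ with $b$ squarefree rather than the paper's $n=r^2s$ with $s\mid r$, but both parametrizations give the count $\ll (X/n_p)^{1/2}$ of squarefull numbers $\leq X$ lying in non-residue classes, and the rest is the same elementary argument. That part is correct.

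For the upper bound you have the right setup — express the representation count $N(c)$ (your $a^2b^3\equiv c$, the paper's $n^3u^2\equiv a$) via character orthogonality, isolate the contribution of $\chi_0$ and of the Legendre character as the main term, pick ranges $A\asymp p^{1-\eta_0}$ and $B\asymp p^{\eta_0}$, and then beat the error down over non-exceptional characters. Your route is a variance/Chebyshev bound, whereas the paper applies H\"older with exponents $(2,4,4)$ directly to the vanishing identity and then extracts $\#\cA$ from the resulting inequality; these are comparable in power. However, there is a genuine gap in your argument, and you explicitly flag it yourself: you do not know how to bound the contribution of $U_\chi=\sum_{b\leq B}\chi^3(b)$ (restricted to squarefree non-residues) when $B\asymp p^{\eta_0+\varepsilon}$ sits below the Burgess threshold $p^{1/4}$. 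Neither of the two remedies you sketch (writing the QNR indicator as $\tfrac12(1-\chi_L)$, or invoking density near $n_p$) resolves this; a pointwise bound on $U_\chi$ is simply unavailable at that length.

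The missing ingredient is the fourth-moment estimate of Ayyad–Cochrane–Zheng (Lemma~\ref{lem:4th Mom}): for any $K\geq 1$,
$$
\sum_{\chi\neq\chi_0}\Bigl|\sum_{n\leq K}\chi(n)\Bigr|^4\leq K^2p^{1+o(1)},
$$
which holds with \emph{no} lower threshold on $K$. This is exactly what the paper's H\"older step reduces the problem to: one bounds $\sum_\chi|U_\chi|^4$ by dropping the squarefree/non-residue restriction (which only enlarges the solution count of $b_1b_2\equiv b_3b_4\bmod p$), accounting for the at-most-$3$-to-$1$ nature of $\chi\mapsto\chi^3$, and then invoking this mean-value estimate; an analogous step handles $\sum_\chi|T_\chi|^4$ via $\chi\mapsto\chi^2$. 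Your Cauchy–Schwarz-then-variance plan would also close using the same fourth-moment input (with $\mathrm{Var}\ll ABp^{o(1)}$ and $\mu\gg AB/p$, giving $\mathrm{Var}/\mu^2\ll p^{2+o(1)}/(AB)=o(p)$ once $AB=p^{1+2\varepsilon}$), but without identifying this lemma the upper bound does not go through; as written the crucial step is left open.
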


% \commK{changed this very slighty}
 Using the lower bound in Theorem~\ref{thm:smooth sqfull almost all}, together with an unconditional
 result of Graham and Ringrose~\cite{GrRin} on primes with large values of $n_p$
 and a conditional result on the Generalised Riemann Hypothesis (GRH) of Montgomery~\cite{Montg}, 
 we immediately derive 
 
\begin{cor}\label{lowerboundnp}
For infinitely many primes $p$  we have 
$$\max_{a \bmod p} F(a,p)\gg \begin{cases}  p^{2} (\log p) (\log \log \log p), & \text{unconditionally,}\\
p^{2} (\log p)(\log \log p), & \text{under the GRH}.
 \end{cases}
$$
\end{cor}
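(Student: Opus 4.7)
The proof is a direct combination of the second lower bound in Theorem~\ref{thm:smooth sqfull almost all}, namely
$$
\max_{a \bmod p} F(a,p) \gg p^{2} n_p,
$$
with infinitely-often lower bounds for the least quadratic non-residue $n_p$. So my plan is simply to substitute into this bound the best available results guaranteeing that $n_p$ is large along some sequence of primes tending to infinity.

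For the unconditional half of the statement, I would invoke the theorem of Graham and Ringrose~\cite{GrRin}, which produces infinitely many primes $p$ with $n_p \gg (\log p)(\log \log \log p)$; multiplication by $p^2$ immediately yields the first case of the corollary. For the conditional half, I would instead appeal to Montgomery's result~\cite{Montg}, which under the Generalised Riemann Hypothesis furnishes infinitely many primes $p$ with $n_p \gg (\log p)(\log \log p)$; the same substitution then delivers the second case.

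There is essentially no obstacle here: all of the analytic content has been absorbed into the proof of Theorem~\ref{thm:smooth sqfull almost all} on the one hand, and into the cited works on the extremal behaviour of $n_p$ on the other. The only minor point to check is that the implicit constants in the Graham--Ringrose and Montgomery bounds are absolute (they are), so the compositions with the $\gg$ in the lower bound of Theorem~\ref{thm:smooth sqfull almost all} go through without loss.
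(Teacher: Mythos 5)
Your proposal matches the paper's argument exactly: the corollary is stated in the paper as an immediate consequence of the lower bound $\max_{a\bmod p}F(a,p)\gg p^2 n_p$ from Theorem~\ref{thm:smooth sqfull almost all} together with the Graham--Ringrose unconditional bound and Montgomery's GRH-conditional bound on primes with large least quadratic non-residue. Nothing more is needed, and your note about absolute implied constants is the right sanity check.
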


%Of course, as noticed above, this result is only of interest for quadratic non-residues $a$. 
In Section \ref{prepa}, we collect some results which will be used to prove the main results in Section \ref{proofs}.

\section{Preparation lemmas}\label{prepa}

\subsection{Exponential sums with reciprocals  of primes}
As usual, we define $\ep(z)= \exp(2i\pi z/p)$. For an integer $k$ with $\gcd(k,p)=1$ 
we use $\overline k$ to denote the multiplicative inverse of $k$ modulo $p$, 
that is, the unique integer with 
$$
k \overline k \equiv 1 \bmod p \mand 1 \le \overline k < p.
$$

 It is convenient to introduce the quantity
\begin{equation} 
\label{eq: BpL}
B(p,L) = \begin{cases}
L^{3/2} p^{1/8}, & \text{if} \ L < p^{1/3},\\
 L^{15/8} & \text{if} \  p^{1/3} \le  L <p.
\end{cases}
\end{equation}

The following bound of the double exponential sum over primes is a combination of~\cite[Lemma~3.5]{MuSh}
(for $L \le p^{1/3}$) and  of~\cite[Lemma~2.4]{Gar} (for $p^{1/3} \le  L <p$).

\begin{lemma}
\label{lem:BilinSums} For any real  $L \le p$, we have
$$
\max_{\gcd(a,p)=1} \left|\sum_{\ell_1,\ell_2 \in \cL} \ep\(a \overline \ell_1  \overline \ell_2\) \right| \le  B(p,L)  p^{o(1)},
$$
as $p\to \infty$, where $\cL$ is the set of primes $\ell \in [L,2L]$. 
\end{lemma}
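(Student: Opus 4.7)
The plan is essentially to split according to the size of $L$ relative to $p^{1/3}$ and invoke the corresponding bilinear estimate from the literature, since the piecewise definition of $B(p,L)$ in~\eqref{eq: BpL} is designed precisely to reflect this split. For the small range $L<p^{1/3}$, I would apply~\cite[Lemma~3.5]{MuSh}, whose proof, at its heart, uses Cauchy--Schwarz in the variable $\ell_1$ to reduce the double sum to an average of complete Kloosterman sums in the $\overline{\ell}_2$ variable, and then appeals to the Weil bound, the $p^{1/8}$ factor arising from the standard square-root cancellation combined with completion over residue classes modulo $p$. For the complementary range $p^{1/3}\le L<p$, I would invoke~\cite[Lemma~2.4]{Gar}, whose proof proceeds via a sum-product / additive-combinatorial treatment of the set $\{\overline\ell:\ell\in\cL\}$ and achieves the bound $L^{15/8+o(1)}$ with no polynomial loss in $p$.

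In writing this up, the main step is verification rather than computation: I would check that the hypotheses of both cited lemmas hold as stated, namely uniformity in $a$ with $\gcd(a,p)=1$, the specification that $\cL$ consists of \emph{primes} in $[L,2L]$ (as opposed to a generic set of integers), and that the $p^{o(1)}$ factors absorb the logarithmic losses from restricting to primes via a Mertens-type estimate or a Vaughan-type identity used inside the reference. I would also rewrite any mismatch in notation (for instance, the interval $[L,2L]$ versus $[N,2N]$ in~\cite{Gar}) so that the statement matches the formulation here.

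The only point requiring any care is the matching at the boundary $L=p^{1/3}$, where one must check that $L^{3/2}p^{1/8}=L^{15/8}$; this makes the combined bound continuous across the transition and shows that either reference may be used at the endpoint, so there is no gap. I do not anticipate a genuine obstacle, since both estimates are black-box inputs; the proof will read as two lines plus the appropriate cross-references.
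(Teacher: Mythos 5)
Your proposal matches the paper's argument exactly: Lemma~\ref{lem:BilinSums} is stated there as a direct combination of~\cite[Lemma~3.5]{MuSh} in the range $L\le p^{1/3}$ and~\cite[Lemma~2.4]{Gar} in the range $p^{1/3}\le L<p$, with no further work supplied. Your continuity check at the crossover point $L=p^{1/3}$ (where $L^{3/2}p^{1/8}=p^{5/8}=L^{15/8}$) is a sound sanity check that the paper leaves implicit.
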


\subsection{Some congruences with  products of primes}

We denote $N_{a,p}(L,h)$ to be the number of solutions to the congruence
\begin{equation} 
\label{equ: cong}
\ell_1 \ell_2 u \equiv a \bmod{p}, \qquad \ell_1 , \ell_2 \in \cL, \quad 1 \le u \le h,
\end{equation}
where $h$ and $L$ are two positive real numbers and $\cL$ is the set of primes $\ell \in [L, 2L].$ %with $\gcd(\ell ,p)=1$.

We now use Lemma~\ref{lem:BilinSums} to derive  an analogue of~\cite[Lemma~3.10]{MuSh}
(which also applies to $L \ge p^{1/3}$). 

\begin{lemma}
\label{lem:congr-asymp} For any integer $a$ and prime $p$ with $\gcd(a,p)=1$ and  reals $h$ and  $L$ 
with $1 \le h,L < p$, 
we have 
$$
N_{a,p}(L,h) = \frac{K^2 h}{p} + O\(B(p,L)  p^{o(1)}\), 
$$
where $K = \# \cL$ is the cardinality of $\cL$ and $B(p,L)$ is defined by~\eqref{eq: BpL}.
\end{lemma}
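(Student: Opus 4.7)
The plan is to establish Lemma~\ref{lem:congr-asymp} by the standard device of orthogonality of additive characters modulo~$p$, extracting the main term from the trivial frequency and controlling the remaining frequencies with Lemma~\ref{lem:BilinSums}.

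Since $\gcd(a,p) = 1$, any triple with $\ell_1 = p$ or $\ell_2 = p$ would force $\ell_1 \ell_2 u \equiv 0 \not\equiv a \bmod p$, and so contributes nothing to $N_{a,p}(L,h)$. Thus I may restrict $\ell_1, \ell_2 \in \cL$ to primes coprime to $p$ and rewrite~\eqref{equ: cong} as $u \equiv a \overline \ell_1 \overline \ell_2 \bmod p$. By orthogonality,
$$
N_{a,p}(L,h) = \frac{1}{p} \sum_{t=0}^{p-1} \(\sum_{1 \le u \le h} \ep(tu)\) \(\sum_{\ell_1, \ell_2 \in \cL} \ep\(-ta \overline \ell_1 \overline \ell_2\)\).
$$
The frequency $t = 0$ yields exactly $K^2 h /p$, the desired main term.

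For $t \ne 0$ we have $\gcd(-ta, p) = 1$, so Lemma~\ref{lem:BilinSums} produces the uniform bound
$$
\left|\sum_{\ell_1, \ell_2 \in \cL} \ep\(-ta \overline \ell_1 \overline \ell_2\)\right| \le B(p,L) p^{o(1)}.
$$
Combining this with the elementary geometric-sum estimate
$$
\sum_{t=1}^{p-1} \left|\sum_{1 \le u \le h} \ep(tu)\right| \ll p \log p,
$$
the contribution of the nonzero frequencies is at most
$$
\frac{1}{p} \cdot p \log p \cdot B(p,L) p^{o(1)} = B(p,L) p^{o(1)},
$$
matching the claimed error. The only piece with real content is the invocation of Lemma~\ref{lem:BilinSums}, which is already in hand; the remainder is a routine completion argument and I anticipate no substantive obstacle.
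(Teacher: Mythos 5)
Your proof is correct and uses precisely the approach the paper has in mind: the paper simply refers to [MuSh, Lemma~3.10], and that result (like yours) is proved by detecting the congruence $u \equiv a\overline{\ell_1}\overline{\ell_2} \bmod p$ via orthogonality of additive characters, extracting the main term from the zero frequency, and feeding the bilinear Kloosterman-type bound of Lemma~\ref{lem:BilinSums} together with the geometric-series estimate $\sum_{t=1}^{p-1}\bigl|\sum_{u\le h}\ep(tu)\bigr| \ll p\log p$ into the remaining frequencies. The only small gloss is treating $\lfloor h\rfloor$ as $h$ in the main term and the case $p\in\cL$, but the resulting discrepancies are $O(L)$ and hence absorbed by $B(p,L)p^{o(1)}$.
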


We also recall that by~\cite[Lemma~3.12]{MuSh} we have:

\begin{lemma} \label{lem: N bound}
For any integer $a$ and prime $p$ with $\gcd(a,p)=1$ and reals $1 \le h,L < p$ we have
$$
N_{a,p}(L,h) \le (L^2h/p+1)p^{o(1)}.
$$
\end{lemma}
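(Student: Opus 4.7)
The plan is a direct counting argument based on the substitution $n = \ell_1 \ell_2 u$. Any triple $(\ell_1, \ell_2, u)$ counted by $N_{a,p}(L,h)$ produces a positive integer $n$ lying in the range $[L^2, 4 L^2 h]$ and satisfying the congruence $n \equiv a \bmod p$. The number of integers in this interval that lie in the residue class of $a$ modulo $p$ is at most $\lfloor 4 L^2 h / p \rfloor + 1 \ll L^2 h/p + 1$.

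Having fixed such an admissible $n$, I would then drop the primality constraint on $\ell_1, \ell_2$ and the interval constraint on $u$, and simply count the ordered factorisations $n = a_1 a_2 a_3$ into three positive integers. The number of these is exactly $\tau_3(n)$, and the standard divisor bound $\tau_3(n) \le n^{o(1)}$ yields $\tau_3(n) \le p^{o(1)}$, since $n \le 4 L^2 h < 4 p^3$.

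Multiplying the two estimates produces
$$
N_{a,p}(L,h) \le \left(L^2 h / p + 1\right) p^{o(1)},
$$
which is the claimed inequality. There is no real obstacle here: we rely only on a length-of-interval estimate together with the divisor bound, and the slack introduced by forgetting the primality of $\ell_1, \ell_2$ and the size constraint $u \in [1,h]$ is entirely absorbed in the $p^{o(1)}$ factor. It is worth noting that this also explains the shape of the bound, with the additive $+1$ accounting for the regime $L^2 h \le p$ in which there may still be a small number of ``accidental'' solutions despite the expected count being less than one.
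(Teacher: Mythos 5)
Your argument is correct and is exactly the standard proof: since each solution $(\ell_1,\ell_2,u)$ determines an integer $n = \ell_1\ell_2 u \in [L^2, 4L^2h]$ with $n \equiv a \bmod p$, the number of admissible $n$ is $\ll L^2h/p + 1$, and for each such $n$ the number of preimages is bounded by $\tau_3(n) \le p^{o(1)}$ (using $n < 4p^3$). The paper itself does not reprove this lemma but quotes it from~\cite{MuSh}; the proof there is the same interval-count plus divisor-bound argument you give, so there is nothing to flag.
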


Furthermore,  let $N^\sharp_{a,p}$ count the number of squarefree solutions to
the congruence~\eqref{equ: cong}. Following the proof of~\cite[Theorem~1.4]{MuSh}, 
but using a more general bound of Lemma~\ref{lem:congr-asymp} instead 
of~\cite[Lemma~3.10]{MuSh} as well as  Lemma~\ref{lem: N bound} (exactly as in~\cite{MuSh}), we derive

 \begin{lemma}
\label{lem:congr-asymp-sf} For any integer $a$ and prime $p$ with $\gcd(a,p)=1$ and reals $h$, $D$ and  $L$ with 
$$1 \le L,h < p \mand 1 \le D \le h^{1/2},
$$
 we have 
$$
N^\sharp_{a,p}(L,h) = \frac{K^2 h}{\zeta(2) p} +
O \( \(\frac{L^2 h}{Dp}+ DB(p,L)   +h^{1/2} \)p^{o(1)} \), 
$$
where $K = \# \cL$ is the cardinality of $\cL$ and $B(p,L)$ is defined by~\eqref{eq: BpL}.
\end{lemma}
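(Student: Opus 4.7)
The plan is to carry out a standard Möbius inversion of the squarefreeness condition, mirroring the proof of~\cite[Theorem~1.4]{MuSh} but feeding in the stronger Lemma~\ref{lem:congr-asymp} above in place of~\cite[Lemma~3.10]{MuSh}. Starting from $\mathbf{1}[m\text{ squarefree}]=\sum_{d^{2}\mid m}\mu(d)$, we write
$$
N^\sharp_{a,p}(L,h)=\sum_{d\ge 1}\mu(d)\,M_d,
$$
where $M_d$ counts the triples $(\ell_1,\ell_2,u)$ from~\eqref{equ: cong} subject to the extra divisibility $d^{2}\mid \ell_1\ell_2 u$. Because $\ell_1,\ell_2$ are primes, in the \emph{generic} regime $\gcd(d,\ell_1\ell_2)=1$ the constraint is equivalent to $d^{2}\mid u$; the substitution $u=d^{2}v$ with $1\le v\le h/d^{2}$ converts the congruence into $\ell_1\ell_2 v\equiv a\overline{d^{2}}\bmod p$, which has exactly the shape handled by Lemmas~\ref{lem:congr-asymp} and~\ref{lem: N bound}.

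For the head $d\le D$ (the hypothesis provides $D\le h^{1/2}$) we apply Lemma~\ref{lem:congr-asymp} term by term with window $\lfloor h/d^{2}\rfloor$. Using the identity $\sum_{d\ge 1}\mu(d)/d^{2}=1/\zeta(2)$ together with the tail estimate $\sum_{d>D}d^{-2}\ll D^{-1}$ and the trivial bound $K\ll L$, the main terms combine to
$$
\frac{K^{2}h}{\zeta(2)\,p} + O\!\left(\frac{L^{2}h}{D p}\right),
$$
while the $D$ many Lemma~\ref{lem:congr-asymp} errors accumulate to $O\bigl(D\,B(p,L)\,p^{o(1)}\bigr)$.

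For the tail $d>D$ we can no longer afford to spend $B(p,L)$ per $d$, and instead use the cruder Lemma~\ref{lem: N bound}. In the generic regime the condition $d^{2}\mid u\le h$ by itself restricts $d$ to $d\le h^{1/2}$, and Lemma~\ref{lem: N bound} applied with window $h/d^{2}$ yields $M_d\ll \bigl(L^{2}h/(d^{2}p)+1\bigr)p^{o(1)}$; summing over $D<d\le h^{1/2}$ produces $O\bigl((L^{2}h/(Dp)+h^{1/2})p^{o(1)}\bigr)$. The non-generic part, where $\ell_1$ or $\ell_2$ divides $d$, forces an additional divisibility $\ell_i\mid u$ and is absorbed into the same bound exactly as in~\cite{MuSh}. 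Combining the three error sources yields the stated formula. The main technical obstacle is the bookkeeping of this non-generic tail: one must verify that the forced relations $\ell_i\mid u$ do not inflate the count beyond $O(h^{1/2}p^{o(1)})$, and the hypothesis $D\le h^{1/2}$ enters precisely so that the diagonal contribution $h^{1/2}$ arising in the tail does not overlap with the head range.
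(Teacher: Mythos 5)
Your main machinery --- M\"{o}bius inversion followed by a head/tail split at $D$, with Lemma~\ref{lem:congr-asymp} on the head $d\le D$ and Lemma~\ref{lem: N bound} on the tail $D<d\le h^{1/2}$, plus the tail estimate $\sum_{d>D}\mu(d)/d^{2}\ll D^{-1}$ to complete the $\zeta(2)^{-1}$ --- is exactly the paper's route, and the three ranges you track reproduce the three error terms $L^{2}h/(Dp)$, $DB(p,L)$ and $h^{1/2}$ correctly.

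There is, however, a real issue in how you set up the M\"{o}bius step. You apply $\mathbf{1}[\,\cdot\text{ squarefree}]=\sum_{d^{2}\mid\cdot}\mu(d)$ to the \emph{full product} $\ell_1\ell_2u$, which creates the ``non-generic'' terms $\gcd(d,\ell_1\ell_2)>1$ that you then wave away as ``absorbed exactly as in~\cite{MuSh}''. That absorption cannot work in the generality of the lemma: for example, in the non-generic piece $\ell_1=\ell_2$, $\ell_1\mid d$, writing $d=\ell_1m$ forces $m^2\mid u$, and a direct count over $\ell_1\in\cL$ and $m\le h^{1/2}$ already yields a contribution of size up to $K\,h^{1/2}\asymp L h^{1/2}$, which is not dominated by $h^{1/2}$ nor, for small $D$ and large $h$, by $DB(p,L)$. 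In fact the paper's $N^\sharp_{a,p}(L,h)$ counts solutions of~\eqref{equ: cong} with $u$ (not $\ell_1\ell_2u$) squarefree: this is precisely what makes the displayed identity $N^\sharp_{a,p}(L,h)=\sum_{d\le h^{1/2}}\mu(d)N_{ad^{-2},p}(L,h/d^{2})$ in~\eqref{eq:N Sigma} hold \emph{exactly}, with the M\"{o}bius sum running over $d^{2}\mid u$ only, so that no $\gcd(d,\ell_1\ell_2)>1$ terms ever arise. The gap between ``$u$ squarefree'' and ``$\ell_1\ell_2u$ squarefree'' (namely the tuples with $\ell_1=\ell_2$ or $\ell_i\mid u$) is not part of Lemma~\ref{lem:congr-asymp-sf} at all; it is dealt with separately in~\eqref{eqn: lower bound} using Lemma~\ref{lem:congr-boundsquare}. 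If you replace your decomposition $N^\sharp=\sum_d\mu(d)M_d$ (with $M_d$ requiring $d^2\mid\ell_1\ell_2u$) by the exact identity $N^\sharp_{a,p}(L,h)=\sum_{d\le h^{1/2}}\mu(d)N_{ad^{-2},p}(L,h/d^{2})$ and delete the non-generic discussion, the rest of your argument goes through verbatim and matches the paper.
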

%\commM{$D$ is not defined?}
%\commI{Fixed now? Also removed $p^{o(1)}$ next to $B$, as we already have it outside}

We also need the bound of ~\cite[Lemma~3.14]{MuSh}  on the number of solutions  
$Q_{a,p}(L,h)$ to the congruence
$$
\ell_1 \ell_2^2 v \equiv a \bmod p, \qquad \ell_1,\ell_2 \in \cL, \ 1 \le v \le h.
$$

\begin{lemma}
\label{lem:congr-boundsquare} For any integer $a$ and prime $p$ with $\gcd(a,p)=1$ and reals $ 1\le L, h \le  p$ 
with $2L h \le p$
we have 
$$
Q_{a,p}(L,h) \le \(L h/p +1\)Lp^{o(1)}.
$$
\end{lemma}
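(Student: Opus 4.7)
The plan is to fix the variable $\ell_2$ and reduce the three-variable count to a two-variable congruence, then lift that congruence to an integer equation and apply the divisor bound.

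First I fix a prime $\ell_2 \in \cL$. Since $\ell_2 \le 2L \le p$, we have $\gcd(\ell_2, p) = 1$, so the congruence $\ell_1 \ell_2^2 v \equiv a \bmod p$ is equivalent to $\ell_1 v \equiv c \bmod p$, where $c = c(\ell_2) \in [1, p-1]$ denotes the residue of $a \overline{\ell_2^2}$ modulo $p$. Writing $n = \ell_1 v$, each solution produces $n = c + mp$ for some non-negative integer $m$, and since $\ell_1 \le 2L$ and $v \le h$, we have $n \le 2Lh$. Hence $0 \le m \le (2Lh - c)/p$, so the number of admissible values of $m$ is $O(Lh/p + 1)$.

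Second, for each fixed $m$, the number of factorizations $c + mp = \ell_1 v$ with $\ell_1 \in \cL$ and $v \in [1,h]$ is bounded by the number of divisors of $c + mp$. Using the hypothesis $2Lh \le p$ we have $c + mp \le 2Lh \le p$, and so the classical divisor bound gives at most $p^{o(1)}$ such factorizations. Combining the two steps, the count for each fixed $\ell_2$ is at most $(Lh/p + 1)\, p^{o(1)}$.

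Finally, summing over the $|\cL| \le L$ choices of $\ell_2$ yields $Q_{a,p}(L,h) \le (Lh/p + 1)\, L\, p^{o(1)}$, as claimed. The argument is essentially routine, and I do not anticipate a genuine obstacle; the only delicate point is verifying that the condition $2Lh \le p$ is strong enough to simultaneously bound the parameter $m$ and keep $c + mp$ in the range where the divisor bound $\tau(n) \le p^{o(1)}$ applies.
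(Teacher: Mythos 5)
Your proof is correct and is the natural elementary argument for such congruence counts. The present paper does not reproduce a proof of this lemma (it simply cites Lemma~3.14 of Munsch--Shparlinski), but your steps --- fix $\ell_2$, reduce to $\ell_1 v \equiv c \bmod p$ with $c = a\overline{\ell_2^{\,2}}$, observe that the hypothesis $2Lh \le p$ forces $\ell_1 v$ to equal the least positive residue $c$ (so $m=0$ in your notation and the factor $Lh/p+1$ is in fact $O(1)$), invoke the divisor bound $\tau(c)\le p^{o(1)}$, and sum over the $O(L)$ primes $\ell_2\in\cL$ --- are precisely the expected ones and give the stated bound.
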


It is shown in~\cite[Lemma~3.11]{MuSh}, that for almost all primes $p$, the asymptotic formula of  Lemma~\ref{lem:congr-asymp} can be improved as follows.

\begin{lemma} \label{lem: N bound almost all}
As $Q \rightarrow \infty$ for any fixed integer $k \ge 1$, for $1 \le L \le Q$ for all but $o(Q / \log Q)$ primes $p \in [Q, 2Q]$, for any integer $a$ with $\gcd(a,p)=1$ and real $h$ with $1 \le h \le p$, we have
$$
N_{a,p}(L,h) = \frac{K^2 h}{p} + O(  (L^{(3k-1)/2k} p^{1/2k}  + L^{(4k-1)/(2k)} )p^{o(1)} )
$$
where $K = \# \cL$ is the cardinality of $\cL$.
\end{lemma}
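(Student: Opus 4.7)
The plan is to use the method of moments in the prime variable $p$, combining it with the standard reduction to bilinear exponential sums over reciprocals of primes. By additive character orthogonality modulo $p$, for $\gcd(a,p)=1$ we can write
$$
N_{a,p}(L,h) = \frac{K^{2}h}{p} + \frac{1}{p}\sum_{t=1}^{p-1} e_p(-ta)\, U(t)\, S_{p}(t),
$$
where $U(t)=\sum_{u=1}^{h}e_p(tu)$ satisfies $|U(t)| \le \min(h,p/|t|)$ and
$$
S_{p}(t) = \sum_{\ell_{1},\ell_{2}\in\cL} e_p(t\, \overline{\ell_{1}\ell_{2}}).
$$
A standard summation of the $\min$-term produces a factor of $p^{o(1)}$, so it suffices to establish the claimed bound for $\max_{1\le t<p}|S_{p}(t)|$ for all but $o(Q/\log Q)$ primes $p\in [Q,2Q]$.

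The next step is to compute the $2k$-th moment of $S_{p}(t)$ averaged simultaneously over $t$ and $p$, namely
$$
W_{2k} := \sum_{Q\le p\le 2Q}\;\sum_{t=1}^{p-1} |S_{p}(t)|^{2k}.
$$
Opening the $2k$-th power and applying orthogonality in $t$ gives $W_{2k} \le \sum_{p}p\,J_{p}$, where $J_{p}$ counts tuples $(\vec\ell,\vec{\ell\,'})\in\cL^{2k}\times \cL^{2k}$ satisfying the congruence
$$
\sum_{j=1}^{k}\overline{\ell_{2j-1}\ell_{2j}} \equiv \sum_{j=1}^{k}\overline{\ell'_{2j-1}\ell'_{2j}} \pmod p.
$$
Swapping the order of summation, I would then classify a tuple as \emph{diagonal} if the rational identity $\sum_{j}1/(\ell_{2j-1}\ell_{2j}) = \sum_{j}1/(\ell'_{2j-1}\ell'_{2j})$ holds in $\Q$, and \emph{non-diagonal} otherwise. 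A diagonal tuple contributes to every prime $p\in[Q,2Q]$, while a non-diagonal tuple contributes only when $p$ divides a specific non-zero integer of absolute value at most $L^{O(k)}$, hence to at most $O(k\log L)$ primes. Since each integer $n$ has at most two ordered representations as $\ell_{1}\ell_{2}$ with $\ell_{1},\ell_{2}\in\cL$, unique factorization constrains the diagonal tuples to arise (up to permutations and symmetry factors depending only on $k$) from pairings of the multisets $\{\ell_{2j-1}\ell_{2j}\}$ and $\{\ell'_{2j-1}\ell'_{2j}\}$, yielding a diagonal count bounded by $K^{2k}p^{o(1)}$.

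Putting these contributions together produces an estimate of the shape
$$
W_{2k} \ll \bigl(Q^{2}K^{2k} + Q\,K^{4k}\log L\bigr)p^{o(1)},
$$
and Markov's inequality then gives
$$
\#\{p\in[Q,2Q] : \max_{t}|S_{p}(t)|>X\} \ll W_{2k}/X^{2k}.
$$
Choosing $X = \bigl(L^{(3k-1)/(2k)}p^{1/(2k)}+L^{(4k-1)/(2k)}\bigr)p^{o(1)}$ makes this exceptional set of size $o(Q/\log Q)$, and substituting back into the character expansion of $N_{a,p}(L,h)$ delivers the stated asymptotic. The main obstacle is the sharp diagonal count: controlling the number of rational solutions to $\sum_{j}1/(\ell_{2j-1}\ell_{2j}) = \sum_{j}1/(\ell'_{2j-1}\ell'_{2j})$ is subtle because of possible coincidences among the $\ell_{j}$'s and non-trivial additive relations, and balancing this diagonal contribution against the trivial $K^{4k}$ bound for non-diagonal tuples is what dictates both terms of the final error.
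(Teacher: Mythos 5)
Your overall strategy — opening $N_{a,p}(L,h)$ with additive characters, isolating a bilinear Kloosterman-type sum $S_p(t)$, computing its $2k$-th moment averaged over both $t$ and $p$, splitting into diagonal (rational identity over $\Q$) and off-diagonal tuples, and closing with Markov — is a sensible and standard route for ``almost all primes'' statements, and is in the right spirit. But there are two concrete gaps.

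First, the asserted diagonal count $\ll K^{2k}p^{o(1)}$ is the crux and is not established. It amounts to claiming that the rational identity $\sum_{j\le k} 1/(\ell_{2j-1}\ell_{2j}) = \sum_{j\le k} 1/(\ell'_{2j-1}\ell'_{2j})$, with all $\ell,\ell'\in[L,2L]$ prime, forces the multisets $\{\ell_{2j-1}\ell_{2j}\}$ and $\{\ell'_{2j-1}\ell'_{2j}\}$ to coincide. This is true and easy for $k=1,2$, but for $k\ge 3$ it is far from obvious and you offer no argument; the crude ``fix all but one semiprime and solve'' argument only gives a diagonal of size $\ll K^{4k-2}$, which is much too big. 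You acknowledge the subtlety but the proof as written simply assumes the strongest possible outcome.

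Second, and more tellingly, the bound you would extract from your own accounting does not reproduce the lemma. With $W_{2k}\ll (Q^{2}K^{2k}+QK^{4k})p^{o(1)}$ and Markov you obtain a threshold $X\gg (Q^{1/(2k)}K + K^{2})p^{o(1)}\asymp (p^{1/(2k)}L + L^{2})p^{o(1)}$, whereas the lemma states $L^{(3k-1)/(2k)}p^{1/(2k)}+L^{(4k-1)/(2k)}$. For $k\ge 2$ your first term is strictly smaller than the lemma's $L^{(3k-1)/(2k)}p^{1/(2k)}$, so the lemma cannot be the direct output of your computation; and your second term $L^{2}$ is strictly larger than the lemma's $L^{2-1/(2k)}$ for every $k\ge 1$, losing precisely the $L^{-1/(2k)}$ saving that makes the parameter $k$ useful. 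The first term of the lemma corresponds to a diagonal of order $K^{3k-1}$ rather than $K^{2k}$, and the second term requires squeezing an extra $L^{1/(2k)}$ out of the off-diagonal — so the actual bookkeeping is more delicate than what you wrote. Until the diagonal count is proved and the off-diagonal estimate is sharpened to match the stated exponents, the argument does not establish the lemma.
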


Finally, we also recall that by~\cite[Lemma~3.13]{MuSh} we have:

\begin{lemma} \label{lem: average N bound}
As $Q \rightarrow \infty$, for all but $o(Q/\log Q)$ primes $p \in [Q, 2Q]$, for any integer $a$, and reals $1\le F , L , h \le p$ with $F, L^2 h <p$, for the sum 
$$
R_{a,p}(F,L,h) = \sum_{F \le d \le 2F} N_{ad^{-2},p}(L,h)
$$
we have
$$
R_{a,p}(F,L,h) \le \max \{F(L^2 h)^{1/4} p^{-1/4}, F^{1/2} (L^2 h)^{1/4} \}p^{o(1)}.
$$
\end{lemma}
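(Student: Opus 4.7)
The plan is to apply Cauchy--Schwarz to the defining sum for $R_{a,p}(F,L,h)$, reducing it to a multiplicative energy count modulo $p$ which can then be controlled on average over primes $p \in [Q,2Q]$ via multiplicative character orthogonality combined with the bilinear sum ideas underlying Lemma~\ref{lem:BilinSums} and Lemma~\ref{lem: N bound almost all}.

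First, writing
$$
R_{a,p}(F,L,h) = \sum_{F\le d\le 2F} N_{a\overline{d^2}, p}(L,h),
$$
the Cauchy--Schwarz inequality over $d$ yields
$$
R_{a,p}(F,L,h)^2 \le F \sum_{F\le d\le 2F} N_{a\overline{d^2}, p}(L,h)^2.
$$
Expanding the square counts sextuples $(\ell_1,\ell_2,u,\ell_1',\ell_2',u')$ equipped with a $d\in [F,2F]$ satisfying both $d^2 \ell_1 \ell_2 u \equiv a \bmod p$ and $\ell_1\ell_2 u\equiv \ell_1'\ell_2'u' \bmod p$. For each sextuple obeying the collision $\ell_1\ell_2 u\equiv \ell_1'\ell_2'u' \bmod p$, the quadratic congruence $d^2\equiv a(\ell_1\ell_2 u)^{-1} \bmod p$ has at most two solutions modulo $p$, each contributing $O(1+F/p)$ values of $d\in [F,2F]$. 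Hence
$$
R_{a,p}(F,L,h)^2 \ll F(1+F/p)\, T(p),
$$
where $T(p)$ denotes the multiplicative energy of $\cL\times\cL\times [1,h]$ modulo $p$.

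The crucial step is to prove that $T(p)\le (L^2h)^{1/2+o(1)}$ for all but $o(Q/\log Q)$ primes $p\in [Q,2Q]$. I would express $T(p)$ by orthogonality of multiplicative characters as
$$
T(p) = \frac{1}{p-1}\sum_{\chi}\Bigl|\sum_{\ell\in\cL}\chi(\ell)\Bigr|^4 \Bigl|\sum_{u=1}^{h}\chi(u)\Bigr|^2,
$$
isolate the diagonal contribution from the principal character $\chi_0$, and then bound the remaining contribution on average by combining Parseval for the $\chi(u)$ sum with a mean-value estimate over primes for the fourth moment of $\sum_{\ell\in\cL}\chi(\ell)$. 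This mean-value estimate uses the conversion to additive characters and the bilinear sum bound from Lemma~\ref{lem:BilinSums}, together with a large-sieve/averaging step over $p\in [Q,2Q]$ paralleling the proof of Lemma~\ref{lem: N bound almost all}; the latter step is what produces the exceptional set of size $o(Q/\log Q)$.

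The main obstacle is the sharpness of the exponent $(L^2h)^{1/4}$. A naive divisor-theoretic treatment, invoking the hypothesis $L^2 h<p$ to upgrade the collision congruence to the integer equality $\ell_1\ell_2 u=\ell_1'\ell_2'u'$, gives only $T(p)\ll (L^2h)^{1+o(1)}$ via the standard energy bound, yielding the weaker estimate $R_{a,p}\ll F^{1/2}(L^2h)^{1/2+o(1)}$. The additional square-root saving must come from genuine cancellation in the non-principal character sums over primes $\ell\in\cL$, which is only accessible after averaging over the modulus $p$. Finally, the two branches of the maximum reflect the dichotomy $F\ge p^{1/2}$ versus $F\le p^{1/2}$: they record which of the two terms $F/p$ or $1$ dominates in the quadratic-root factor $1+F/p$ after the inner Cauchy--Schwarz step.
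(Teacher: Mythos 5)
The statement is quoted directly from \cite[Lemma~3.13]{MuSh} and the present paper gives no proof, so your argument must be judged on its own. Unfortunately the central step breaks down.

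After your Cauchy--Schwarz over $d$, you reduce to showing $T(p)\le (L^2h)^{1/2+o(1)}$ for almost all $p\in[Q,2Q]$, where
$$
T(p)=\#\bigl\{(\ell_1,\ell_2,u,\ell_1',\ell_2',u')\in\cL^2\times[1,h]\times\cL^2\times[1,h]:\ \ell_1\ell_2u\equiv\ell_1'\ell_2'u'\bmod p\bigr\}.
$$
This target is unattainable: the diagonal $(\ell_1,\ell_2,u)=(\ell_1',\ell_2',u')$ alone contributes $K^2\lfloor h\rfloor$ sextuples, where $K=\#\cL\sim L/\log L$, so $T(p)\ge K^2h/2\gg (L^2h)^{1-o(1)}$ for \emph{every} prime $p$. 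No amount of averaging over $p$ or cancellation in non-principal character sums can reduce this, because the diagonal is a fixed set of integer sextuples independent of the modulus. (In the identity $T(p)=\tfrac1{p-1}\sum_\chi|\sum_\ell\chi(\ell)|^4|\sum_u\chi(u)|^2$, the $\gg K^2h$ lower bound is baked in — a moment of a nonnegative quantity equals the count, and the count contains the diagonal.) Consequently the best your route can possibly yield is $R_{a,p}\ll F^{1/2}(L^2h)^{1/2+o(1)}$, which is weaker than the claimed $F^{1/2}(L^2h)^{1/4}$ by a full factor $(L^2h)^{1/4}$; this is the "obstacle" you identified at the end, but it is not an obstacle to be overcome within this framework — it is a structural floor. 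The Cauchy--Schwarz over $d$ is simply too lossy: it replaces the congruence $d^2\ell_1\ell_2u\equiv a$, which ties the $(\ell_1,\ell_2,u)$ variables to a sparse set of residues, by the much weaker collision condition $\ell_1\ell_2u\equiv\ell_1'\ell_2'u'$, and the constraint involving $a$ is thrown away entirely.

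Two smaller issues. First, under the stated hypothesis $F\le p$ the number of $d\in[F,2F]$ with $d^2\equiv c\bmod p$ is $O(1)$, not $O(1+F/p)$; the factor $1+F/p$ never appears, so your proposed explanation of the two branches of the maximum via that factor does not hold up. The dichotomy $F\lessgtr p^{1/2}$ in the statement must come from a different source, most likely from whether the quantity $F^2$ (entering through a product like $d^2\ell_1$ or $dd'$) is below or above the modulus, which changes whether an integer--equality argument or a character-sum/large-sieve argument applies. Second, your expression for $T(p)$ should carry a fourth power on the $\ell$-sum and a second power on the $u$-sum (as you wrote), but note that isolating the principal character does \emph{not} isolate the diagonal — these are different decompositions, and the diagonal is distributed across all characters.

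To prove the lemma one needs an argument that keeps the constraint $d^2\ell_1\ell_2u\equiv a$ in play, e.g.\ by writing $R_{a,p}$ as a full character sum $\tfrac1{p-1}\sum_\chi\overline\chi(a)\bigl(\sum_d\chi^2(d)\bigr)\bigl(\sum_\ell\chi(\ell)\bigr)^2\bigl(\sum_u\chi(u)\bigr)$, separating the principal character (whose contribution $FK^2h/p$ is dominated by $F(L^2h)^{1/4}p^{-1/4}$ precisely thanks to the hypothesis $L^2h<p$), and then applying H\"older across the remaining characters together with fourth-moment bounds (of the type in Lemma~\ref{lem:4th Mom}) and the averaging-over-$p$ mechanism of Lemma~\ref{lem: N bound almost all}. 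As written, your proposal cannot reach the stated exponent.
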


\subsection{Moments of character sums}
Let $\Omega_p$ denote the set of all  Dirichlet characters modulo $p$
and let  $\Omega_p^* = \Omega_p \setminus\{\chi_0\}$ denote the set of all 
non-principal Dirichlet characters modulo $p$.

We need the following result of 
Ayyad, Cochrane and Zheng~\cite[Theorem~2]{ACZ}, see also~\cite{Kerr} for a slightly 
sharper bound (which however does not change our final result).
 
\begin{lemma}
\label{lem:4th Mom}
 For any integer $K \geq 1$, we have
$$
\sum_{\chi \in \Omega_p^* } 
\left| \sum_{1 \le n \le K}
\chi(n)\right|^4 \le K^2p^{1+o(1)}. 
$$ 
\end{lemma}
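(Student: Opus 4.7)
\textbf{Proof plan for Lemma~\ref{lem:4th Mom}.}
Writing $S_\chi = \sum_{1 \le n \le K} \chi(n)$ for brevity, the plan is to convert the fourth moment $\sum_{\chi}|S_\chi|^4$ into a multiplicative congruence-counting problem via orthogonality of Dirichlet characters modulo $p$, and then to estimate that count by a combination of divisor bounds (for small $K$) and bilinear exponential sum bounds (for larger $K$).

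First, by the orthogonality relation $\sum_{\chi \in \Omega_p}\chi(a)\overline{\chi(b)} = (p-1)\mathbf{1}_{a \equiv b \bmod p}$ for $\gcd(ab,p)=1$, one obtains
$$
\sum_{\chi \in \Omega_p} |S_\chi|^4 = (p-1)\,T(K,p),
$$
where $T(K,p)$ denotes the number of quadruples $(n_1,n_2,n_3,n_4) \in [1,K]^4$ with $\gcd(n_1 n_2 n_3 n_4,p)=1$ and $n_1 n_2 \equiv n_3 n_4 \bmod p$. The principal character contributes $|S_{\chi_0}|^4 = (K - \lfloor K/p\rfloor)^4 = K^4 + O(K^4/p)$, which must be subtracted. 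By the periodicity of $S_\chi$ modulo $p$ for $\chi \ne \chi_0$ one may further assume $K < p$ without loss.

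To bound $T(K,p)$, write the congruence as $n_1 n_2 - n_3 n_4 = k p$ with $|k| \le K^2/p$. In the range $K \le p^{1/2}$ only $k=0$ is possible, reducing the count to solutions of $n_1 n_2 = n_3 n_4$, whose number is at most $\sum_{m \le K^2} d(m)^2 \ll K^2 (\log K)^3 = K^2 p^{o(1)}$ by standard divisor estimates; multiplying by $p-1$ already yields the lemma in this range. For $p^{1/2} < K < p$, introduce additive characters to obtain
$$
T(K,p) = \frac{K^4}{p} + O(K^2) + \frac{1}{p}\sum_{t=1}^{p-1}|A(t)|^2, \qquad A(t) = \sum_{1 \le n,m \le K}\ep(tnm),
$$
and then bound $\sum_{t \ne 0}|A(t)|^2$ by applying Cauchy--Schwarz in one of the variables and estimating the resulting inner linear sums by geometric series. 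This yields $T(K,p) = K^4/p + O(K^2 p^{o(1)})$. Combining with $|S_{\chi_0}|^4 = K^4 + O(K^4/p)$, the leading $K^4$ terms cancel and one is left with $\sum_{\chi \in \Omega_p^*}|S_\chi|^4 \ll K^2 p^{1+o(1)}$.

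The main obstacle is proving $T(K,p) - K^4/p \ll K^2 p^{o(1)}$ in the middle range $p^{1/2} < K < p$, which requires genuine cancellation in the bilinear sums $A(t)$. A direct appeal to Burgess or P\'olya--Vinogradov bounds on $S_\chi$ itself would give only $\sum_{\chi \in \Omega_p^*}|S_\chi|^4 \le K^2 p^{3/2 + o(1)}$, so the extra $p^{1/2}$ saving underlying Ayyad--Cochrane--Zheng comes precisely from this bilinear-sum analysis.
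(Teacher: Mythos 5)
The paper does not prove this lemma itself: it is quoted directly from Ayyad--Cochrane--Zheng~\cite[Theorem~2]{ACZ} (with~\cite{Kerr} mentioned for a slightly sharper variant), so there is no internal proof to compare against; the only question is whether your sketch is a valid independent argument.

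Your reduction via orthogonality to $T(K,p)=\#\{(n_1,n_2,n_3,n_4)\in[1,K]^4:\ n_1n_2\equiv n_3n_4\bmod p\}$, the removal of the principal character, and the treatment of $K\le p^{1/2}$ (only $k=0$ survives and $T(K,p)\le\sum_{m\le K^2}d(m)^2\ll K^2(\log K)^3$) are all fine. The gap is in the range $p^{1/2}<K<p$. Observe first that $T(K,p)=p^{-1}\sum_{t\bmod p}|A(t)|^2$ with $A(t)=\sum_{n,m\le K}\ep(tnm)$ is an exact Parseval identity, so $\sum_{t\ne0}|A(t)|^2=pT(K,p)-K^4$ and nothing is gained until one bounds $A(t)$ by independent means. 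The means you propose --- Cauchy--Schwarz in $n$ and then a geometric-series/completion bound for the inner $m$-sum --- gives, after substituting $s=tn$ for each fixed $n$ coprime to $p$,
$$
\sum_{t\ne0}|A(t)|^2\ \le\ K\sum_{n\le K}\sum_{s\ne0}\Bigl|\sum_{m\le K}\ep(sm)\Bigr|^2\ =\ K\cdot K\cdot\bigl(pK-K^2\bigr)\ \le\ pK^3,
$$
which is exactly the trivial estimate $T(K,p)\le K^3$ (for $K<p$, fixing $n_1,n_2,n_3$ determines $n_4$ modulo $p$ and hence uniquely in $[1,K]$). Feeding this into your framework yields $\sum_{\chi\in\Omega_p^*}|S_\chi|^4\ll pK^3$, which overshoots the target $K^2p^{1+o(1)}$ by a factor of order $K$ --- and this factor is large precisely on the range $p^{1/2}<K<p$ where you invoke the bilinear argument. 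So the claimed conclusion ``$T(K,p)=K^4/p+O(K^2p^{o(1)})$'' does not follow from the sketch. The actual content of~\cite{ACZ} on this range is a more delicate divisor-theoretic analysis (effectively, controlling the shifted correlations $\sum_m d(m)d(m+kp)$ over $m\le K^2$ accurately enough to recover the main term $K^4/p$ with leading constant $1$), not a single Cauchy--Schwarz applied to $A(t)$; you would need to supply that argument, or retreat to citing~\cite{ACZ} as the paper does.
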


\subsection{Quadratic non-residues in short intervals}

Let $T_p(K)$ denote the number of quadratic non-residues modulo $p$
in the interval $[1,K]$.

We need an extension of~\eqref{eq:Burg}. 
The following bound 
is given in~\cite[Theorem~2.1]{BHGS}.

\begin{lemma}
\label{lem:Nonres}
 For   any  real $\eta >\eta_0$, where $\eta_0$ is given by~\eqref{eq:Burg exp}, 
 there is a constant $c > 0$, such that for
 a sufficiently large $p$ and $K \ge p^\eta$ we have
$$
T_p(K) \ge c K. 
$$ 
\end{lemma}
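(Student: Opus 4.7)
The plan is to reduce the statement to an upper bound on the character sum
$$
S(K) = \sum_{n \le K}\chi(n),
$$
where $\chi$ denotes the quadratic character (Legendre symbol) modulo $p$. From the identity
$$
T_p(K) = \tfrac{1}{2}\(K - S(K)\) + O(K/p),
$$
the desired conclusion $T_p(K) \ge cK$ is equivalent to establishing $|S(K)| \le (1 - 2c)K + O(K/p)$ for some constant $c = c(\eta) > 0$ whenever $K \ge p^\eta$ with $\eta > \eta_0$.

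For $K \ge p^{1/4 + \varepsilon}$ the classical Burgess bound
$$
|S(K)| \le K^{1-1/r}\,p^{(r+1)/(4r^2) + o(1)}
$$
with a suitable integer $r$ already yields $|S(K)| = o(K)$, which gives the claim with $c$ arbitrarily close to $1/2$. The substance of the lemma is the complementary range $p^{1/(4\sqrt{e}) + \varepsilon} \le K < p^{1/4 + \varepsilon}$, in which Burgess alone is insufficient. Here I would decompose $S(K)$ according to the largest prime factor $P^+(n)$: fixing a parameter $y = K^{1/u}$ with $1 < u < \sqrt{e}$,
$$
S(K) = \sum_{\substack{n \le K \\ P^+(n) \le y}}\chi(n) \;+\; \sum_{y < q \le K}\chi(q)\sum_{\substack{m \le K/q \\ P^+(m) \le q}}\chi(m).
$$
The first (smooth) term is absolutely bounded by the count of $y$-smooth integers $\Psi(K,y) \sim K\rho(u)$, where $\rho$ is Dickman's function; the identity $\rho(\sqrt{e}) = 1 - \log\sqrt{e} = 1/2$ places this piece strictly below $K/2$ for $u$ slightly smaller than $\sqrt{e}$. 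The rough term is handled prime-by-prime: when $q > \sqrt{K}$ the smoothness constraint on $m$ is vacuous and one applies Burgess directly on the short interval $[1, K/q]$, while in the range $y < q \le \sqrt{K}$ one combines Mertens's estimate $\sum_{y < q \le K} 1/q = \log u + o(1)$ with dyadic decomposition in $q$ and Burgess on each dyadic window.

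The main obstacle is controlling the rough contribution over primes $q$ for which $K/q$ is too short for Burgess to improve on the trivial bound $K/q$. This forces a careful balancing: the Burgess parameter $r$ must be tuned as a function of the dyadic scale $Q < q \le 2Q$, and the residual trivial pieces must be absorbed against the gap $1/2 - \rho(u) > 0$ produced by the smooth term. After optimizing in $u$ and $r$, using the explicit formula $\rho(u) = 1 - \log u$ on $[1,2]$, the combined bound takes the form $|S(K)| \le (1-2c)K + O(K/p)$ for some $c > 0$ precisely in the regime $K \ge p^{\eta}$ with $\eta > 1/(4\sqrt{e})$. The threshold $\eta_0 = 1/(4\sqrt{e})$ thus emerges as the product of the Burgess exponent $1/4$ governing short character sums and the Dickman threshold $1/\sqrt{e}$ at which $\rho$ crosses $1/2$, and we recover the required density $T_p(K) \ge cK$.
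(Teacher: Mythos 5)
Your opening reduction is fine: $T_p(K)\ge cK$ is indeed equivalent to the one-sided bound $S(K)\le(1-2c)K+O(K/p)$, and the range $K\ge p^{1/4+\varepsilon}$ follows from Burgess. The problem is the intermediate range $p^{\eta_0+\varepsilon}\le K<p^{1/4+\varepsilon}$, which you rightly call the substance of the lemma: there your plan collapses, because every sub-sum in your decomposition has length at most $K<p^{1/4}$. The inner sums run over $[1,K/q]$ with $K/q\le K^{1-1/u}$, and the dyadic blocks of primes $q$ lie inside $[y,K]$; the Burgess bound is trivial for character sums of length below $p^{1/4+o(1)}$ (that is exactly why $1/4$ appears in~\eqref{eq:Burg exp}), it does not apply to sums over primes at all, and the double sum does not factor since both the endpoint $K/q$ and the smoothness condition on $m$ depend on $q$. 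So ``Burgess directly on the short interval $[1,K/q]$'' and ``Burgess on each dyadic window'' give nothing beyond the trivial bound, and your estimate for $|S(K)|$ never beats $\Psi(K,y)+\bigl(K-\Psi(K,y)\bigr)=K$. Tellingly, the hypothesis $K\ge p^{\eta_0+\varepsilon}$ is never invoked at a point where it could legitimize a Burgess application, so the threshold $\eta_0$ cannot ``emerge'' from this setup; and any genuine cancellation in your rough term with $y=K^{1/u}$, $u>1$ fixed, would give $S(K)\le(\rho(u)+o(1))K$ with no lower restriction on $K$ at all, hence a least-non-residue bound stronger than Burgess--Vinogradov, which is open. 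The missing step is therefore not an optimisation in $u$ and $r$ but the heart of the matter. (A smaller slip: $\rho$ is decreasing, so for $u$ slightly below $\sqrt e$ one has $\rho(u)>1/2$, not below.)

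For comparison, the paper does not reprove this lemma; it quotes \cite[Theorem~2.1]{BHGS}. The argument there runs in the opposite direction to yours: Burgess is applied on the long interval $[1,x]$ with $x=p^{1/4+\varepsilon}$, where it shows that $(1/2-o(1))x$ integers are non-residues; since every non-residue has a prime non-residue factor, while by Mertens/Dickman only $(\log u+o(1))x<(1/2-\delta)x$ integers $n\le x$ have a prime factor exceeding $K$ when $u=\log x/\log K<\sqrt e$, a positive proportion of the non-residues $n\le x$ involve prime non-residues at most $K$, and this is then converted into a positive proportion of non-residues in $[1,K]$. That is where the product of the Burgess exponent $1/4$ and the Dickman threshold $1/\sqrt e$ genuinely comes from: the character-sum input lives at length $p^{1/4+\varepsilon}$ and is transported down to $[1,K]$ by multiplicative structure, rather than by estimating $S(K)$ piecewise. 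If you want a self-contained proof you should restructure along those lines, or simply cite \cite{BHGS} as the paper does.
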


\section{Proofs of main results}\label{proofs}

\subsection{Proof of Theorem~\ref{thm: smooth sqfr}}

For a sufficiently small $\varepsilon >0$, we set 
$$
L= p^{(\alpha - \beta )/2-\varepsilon/2}\mand  h= p^{\beta}.
$$
Since $L \le h \leq y$, $N^\sharp_{a,p}(L,h)$ counts a subset of $y$-smooth integers in an arithmetic progression.
Noticing that $L^2 h  \le  p^{\alpha -\varepsilon} = x^{1-\varepsilon +o(1)}$, we see that for a sufficiently large $p$ we have
\begin{equation}
 \label{eqn: lower bound} 
 \begin{split}
\psi^\sharp(x,y;p,a) & \ge N^\sharp_{a,p}(L,h)  + O\( \( h/p +1\)Lp^{o(1)}\)\\
& = N^\sharp_{a,p}(L,h)  + O\(Lp^{o(1)}\), 
\end{split}
\end{equation}
where we estimated the contribution coming from non-squarefree products $\ell_1 \ell_2 u$ (precisely products with $\ell_1 = \ell_2$ or with $ \ell_1 \mid u$ or with $ \ell_2\mid u$) using Lemma~\ref{lem:congr-boundsquare} with $h/L$ replacing $h$ as in the end of the proof of~\cite[Theorem~1.4]{MuSh}.

%\commI{Movedv this here}
We use a crude estimate for the main term:
\begin{equation} \label{eq:MT}
\frac{K^2 h}{p} \sim  \frac{L^2 h}{p \(\log L\)^2} =  p^{\alpha -1 -\varepsilon + o(1)}.
\end{equation}

Choosing
$$
D=p^{\varepsilon /2}
$$ 
and using Lemma~\ref{lem:congr-asymp-sf}, we derive
\begin{equation}
\label{lowbound}
 \begin{split}
\psi^\sharp(x,y;p,a) & \gg \frac{K^2 h}{p} + O\(\( p^{\varepsilon /2}B(p,L) +L + h^{1/2}\) p^{o(1)}\)  \\ 
& = \frac{K^2 h}{p} + O\(\( p^{\varepsilon /2} B(p,L) + h^{1/2}\)  p^{o(1)}\)
\end{split}
\end{equation}
 %\commI{I1. Added $p^{\varepsilon /2}$ before $B()$. Agree? 2. Added about  $L^2 h(Dp)^{-1}$.}
since $B(p,L)$ dominates $L$ and the main term~\eqref{eq:MT} dominates the first error term
 $L^2 h(Dp)^{-1}$ in  Lemma~\ref{lem:congr-asymp-sf}.

%\commK{$B(L,p)$ dominates $L$ always}
To begin, we remark that the term $h^{1/2}$ in~\eqref{lowbound} is dominated by the main term due to the inequality $\alpha-1 >9/2-3\beta-1  > \beta/2$ for $\beta \leq 1$.
We split the discussion on the contribution of $B(p,L)$ into two cases depending on $\alpha$.

Firstly, suppose that $\alpha \in (9/2 -3\beta, 2/3 +\beta]$. Since $\alpha \le 2/3+\beta$, this implies $L < p^{1/3}$ and hence $B(p,L) = L^{3/2}p^{1/8}$ by ~\eqref{eq: BpL}. Therefore, recalling~\eqref{lowbound} and ~\eqref{eq:MT}, we obtain
\begin{equation} \label{lower bound 1}
\psi^\sharp(x,y;p,a) \gg  p^{\alpha -1 -\varepsilon + o(1)} +O(p^{3(\alpha - \beta )/4 -\varepsilon/4 +1/8 +o(1)}).
\end{equation}
For $\varepsilon$ sufficiently small, we have $\alpha > 9/2 -3\beta +3\varepsilon$ which implies that the main term dominates trivially the remainder term in~\eqref{lower bound 1}.

Secondly, assume that $\alpha \in (2/3 +\beta, 3\beta]$. In particular, since $\beta \le 1$ we have
$$
2/3+\beta +\varepsilon \le \alpha <3\beta < 2 + \beta +\varepsilon,
$$
for $\varepsilon >0$ chosen sufficiently small. Hence $p^{1/3} \le L < p$ and we have $B(p,L)=L^{15/8}$ by ~\eqref{eq: BpL}. Therefore, recalling ~\eqref{lowbound} and ~\eqref{eq:MT}, we obtain
\begin{equation} \label{lower bound 2}
 \psi^\sharp(x,y;p,a) \ge    p^{\alpha -1 -\varepsilon + o(1)}  + O(p^{15(\alpha - \beta)/16 -7\varepsilon/16 +o(1)}).
\end{equation}
 Notice that we have
$$
\alpha > 2/3 +\beta \ge 16-15\beta +9\varepsilon +o(1)
$$
 when $\beta \in (23/24,1]$ and $\varepsilon >0$ is sufficiently small. It follows that the main term dominates the remainder term in~\eqref{lower bound 2}. Therefore, in all cases we conclude
$$
\psi^\sharp(x,y;p,a) \ge p^{\alpha -\varepsilon -1 +o(1)}.
$$
Since this is valid for all sufficiently small $\varepsilon >0$, the result follows.

\subsection{Proof of Theorem~\ref{thm:smooth sqfr almost all}}

%\commI{Some changes in the 1st two pars} 
We follow the proof of~\cite[Theorem 1.6]{MuSh}. 
For $\varepsilon >0$, we set  
\begin{equation} \label{equ: LhDE}
L=Q^{ (\alpha - \beta)/2 - \varepsilon/2 }, \quad h = Q^{\beta}, \quad D=Q^{\varepsilon/2}, \quad 
E=Q^{(\alpha-1)/2}.
\end{equation}

 We note that $(\alpha-1)/2> 0$ for $\alpha \in \cI(\beta )$ and so $D < E$ if $\varepsilon >0$ is sufficiently small. We also have $E < h^{1/2}$ since $\alpha < \beta +1$.

Since $\alpha < \beta +1 \le 3\beta$ in the range $\beta \in (7/8,1]$, we get $L \le h$. In 
particular, we have as before the inequality ~\eqref{eqn: lower bound}. 
%\commI{I removed some stuff here which I wasn't able to parse}

%\commK{OK!}
By inclusion and exclusion, we have
\begin{equation} \label{eq:N Sigma}
N^\sharp_{a,p}(L,h)   = \sum_{d \le h^{1/2}} \mu(d) N_{ad^{-2},p}(L, h/d^2) 
 = \Sigma_1 + \Sigma_2 + \Sigma_3,
\end{equation}
where
\begin{align*}
& \Sigma_1 = \sum_{d \le D} \mu(d) N_{ad^{-2},p}  (L,h/d^2),\\
&\Sigma_2 = \sum_{D <d  \le E} \mu(d) N_{ad^{-2},p}  (L,h/d^2),\\
&\Sigma_3 = \sum_{E < d  \le h^{1/2}} \mu(d) N_{ad^{-2},p}  (L,h/d^2).
\end{align*}

To abstain from clutter, all the bounds below are valid for all but $o(Q/\log Q)$ primes  $p \in [Q,2Q]$.

Since $\alpha < \beta +1 < 2+\beta +\varepsilon +o(1)$ and $\beta \le 1$, we obtain respectively $L \le Q$ and $h \le p$. By Lemma~\ref{lem: N bound almost all}
\begin{equation} \label{eq:Sigma1}
\begin{split}
\Sigma_1 & =  \frac{K^2 h}{\zeta(2) p} \\
& \qquad +  O \( \frac{K^2 h}{Dp}+  D(L^{(3k-1)/(2k)}p^{1/(2k)} +L^{(4k-1)/(2k)})p^{o(1)}   \)
\end{split}
\end{equation}
for any fixed positive integer $k$.

By Lemma~\ref{lem: N bound} with $h/d^2$ replacing $h$ there, we have 
\begin{equation} \label{eq:Sigma2}
\Sigma_2 \le \(\frac{L^2 h}{Dp} +E \)p^{o(1)}.
\end{equation}

We split $\Sigma_3$ into $O(\log p)$ sums with intervals of the form $[F,2F]$ where $E \le F \le h^{1/2}$. 

%\commI{Reworded} 
From the choice of $E$ in~\eqref{equ: LhDE} we see  that 
$$L^{2}h/d^2 \le L^{2}h/F^2  \le L^{2}h/E^2  < p,
$$ 
hence by Lemma~\ref{lem: average N bound}
\begin{align*}
R_{a,p}(F,L,h/F^2) & \le \max \{ F(L^2 h/F^2)^{1/4} p^{-1/4}, F^{1/2}(L^2 h/F^2)^{1/4} \} p^{o(1)} \\
& = (L^2 h)^{1/4} p^{o(1)}
\end{align*}
since $F \le h^{1/2} \le p^{1/2}$ and so
\begin{equation} \label{eq:Sigma3}
\Sigma_3 \le (L^2 h)^{1/4} p^{o(1)}.
\end{equation}

Substituting~\eqref{eq:Sigma1}, \eqref{eq:Sigma2} and~\eqref{eq:Sigma3} in~\eqref{eq:N Sigma}, 
we obtain
$$
N^\sharp_{a,p}(L,h) = \frac{K^2 h}{\zeta(2) p} +  O\(Rp^{o(1)}  \)
$$ where we set 
$$
R =  D\(L^{(3k-1)/(2k)}p^{1/(2k)}  +L^{(4k-1)/(2k)} \) + \frac{L^2 h}{Dp} +E + (L^2 h)^{1/4}    
$$
and the main term verifies an analogue of~\eqref{eq:MT}, precisely, 
\begin{equation} \label{eq:MT-Q}
\frac{K^2 h}{p} \sim  \frac{L^2 h}{p \(\log L\)^2} =  Q^{\alpha -1 -\varepsilon + o(1)}.
\end{equation}

%\commI{Some changes here} 
Notice that  the choice of $E$ in~\eqref{equ: LhDE} implies that $E$ is smaller than the main term~\eqref{eq:MT-Q}. We now see  from~\eqref{eq:MT-Q} that if
\begin{equation} 
 \label{equ: alpha condition}
\alpha -1  > \max \left\{   \frac{3k-1}{2k} \frac{\alpha - \beta}{2} + \frac{1}{2k},    \frac{4k-1}{2k}  \frac{\alpha - \beta}{2} , 
\frac{\alpha}{4}  , \frac{\alpha - \beta}{2} \right \} 
\end{equation}
for some positive integer $k$, then for a sufficiently small $\varepsilon$ the main term dominates the remainder term in~\eqref{eqn: lower bound} and the result follows. 
%\commI{As before $L$ should also be taken care of}

Rearranging~\eqref{equ: alpha condition} gives
\begin{equation} 
 \label{equ: alpha condition 2}
\alpha > \max \left  \{ \frac{(1-3k)\beta + 2 +4k}{k+1} , (1-4k)\beta +4k , 4/3, 2-\beta \right \}.
\end{equation}

%\commI{Edits from here till the end of the proof}  
 First, we remark that $2-\beta \leq (1-4k)\beta +4k $ since $\beta \leq 1$ and we can discard $2-\beta$ from the maximum in~\eqref{equ: alpha condition 2}. 
 
Furthermore, for $k\leq 5$, we see that $4/3$ is dominated by the first term of the right hand side of~\eqref{equ: alpha condition 2}. In this case, a quick computation shows that 
$$
\frac{(1-3k)\beta + 2 +4k}{k+1} \geq  (1-4k)\beta +4k
$$ 
if and only if $\beta \geq 1-1/2k^2$. 
Thus, in the interval $(1-1/2k^2,1-1/2(k+1)^2]$, the maximum is given either by $\((1-3k)\beta + 2 +4k\)/{(k+1)}$ 
%\commK{corrected 4 to $4m$ and other slight changes are made on this page}

or by $(1-4m)\beta +4m$ with $m\geq k+1$. Since the function $f(z)=(1-4z)\beta +4z$ is a 
monotonically increasing function of $z$, we check only the case $m=k+1$ and verify that 
$$\frac{(1-3k)\beta + 2 +4k}{k+1} \geq f(k+1) = (1-4(k+1))\beta +4(k+1)
$$
 if and only if 
$$
\beta\ge \beta_0(k)
$$
where 
$$
\beta_0(k) = 1-\frac{1}{2(k^2+k+1)}.
$$
Splitting the interval 
$$
\cI_k = \(1-\frac{1}{2k^2},1-\frac{1}{2(k+1)^2} \right]
$$ into two intervals as follows
\begin{align*}
\cI_k& = \(1-\frac{1}{2k^2},1-\frac{1}{2(k^2+k+1)}\right]\\
& \qquad \qquad \qquad \qquad \bigcup  \(1-\frac{1}{2(k^2+k+1)},1-\frac{1}{2(k+1)^2}\right]
\end{align*}
and recalling that $k \le 5$, we deduce after short computations the result for $\beta \leq  \beta_0(5) = 61/62$. 
 
 For $k\geq 6$, noticing that $(1-4\beta)+4k \geq 4/3$ in the range 
$$
\beta \leq 1-\frac{1}{3(4k-1)},
$$ we also deduce the case $\beta \in (61/62, 68/69]$. 

For the remaining case $\beta \in ( 68/69, 1]$ and $k\geq 6$, we see that 
$$
\frac{(1-3k)\beta + 2 +4k}{k+1}\leq 4/3.
$$

%In order to maximize the length of the interval of validity for $\alpha$, we choose $k$ as small as possible. For fixed $k_0$ and $\beta \geq 1-1/2(k_0)^2$ the maximum is given by the first term $\frac{(1-3k)\beta + 2 +4k}{k+1}$, however 

Based on the above argument, we now give  explicit choices of $k$ and corresponding 
intervals which optimise our bound. 

\begin{itemize}
\item If $\beta \in (7/8, 13/14]$,   we take $k=2$ and~\eqref{equ: alpha condition 2} simplifies to 
$$
\alpha  > \max \left \{ 5(2-\beta)/3, 8-7\beta,  4/3, 2-\beta \right  \}  = 5(2-\beta)/3.
$$ 

\item If $\beta \in (13/14 , 17/18  ]$,    we take $k=3$ and~\eqref{equ: alpha condition 2} simplifies to
$$
\alpha > \max \left  \{  (7-4 \beta)/2, 12 -11\beta,4/3, 2-\beta  \right \} = 12 -11\beta.
$$

\item If $\beta \in (17/18, 25/26]$,  we take $k=3$ and~\eqref{equ: alpha condition 2} simplifies to
$$
\alpha > \max \left  \{(7-4 \beta)/2, 12 -11\beta,4/3, 2-\beta  \right \} = (7-4 \beta)/2.
$$

\item If $\beta \in (25/26,31/32 ]$,   we take $k=4$ and~\eqref{equ: alpha condition 2} simplifies to
$$
\alpha >  \max \left \{  (18-11\beta)/5, 16-15\beta, 4/3, 2-\beta  \right \}  = 16-15\beta.
$$

\item If $\beta \in (31/32,41/42 ]$,  we take $k=4$ and~\eqref{equ: alpha condition 2} simplifies to
$$
\alpha >  \max \left \{ (18-11\beta)/5, 16-15\beta, 4/3, 2-\beta  \right \}  = (18-11\beta)/5.
$$

\item If $\beta \in (41/42,49/50 ]$,   we take $k=5$ and \eqref{equ: alpha condition 2} simplifies to
$$
\alpha >  \max \left \{ (11-7\beta)/3 , 20-19\beta, 4/3, 2-\beta \right \} = 20-19\beta.
$$

\item If $\beta \in (49/50, 61/62 ]$,   we take $k=5$ and \eqref{equ: alpha condition 2} simplifies to
$$
\alpha >  \max \left \{ (11-7\beta)/3, 20-19\beta, 4/3, 2-\beta \right \} = (11-7\beta)/3.
$$

\item If $\beta \in (61/62, 68/69]$,   we take $k=6$ and \eqref{equ: alpha condition 2} simplifies to
$$
\alpha >  \max \left \{  (26-17 \beta)/7, 24-23\beta, 4/3, 2-\beta \right \}=  24-23\beta.
$$

\item If $\beta \in (68/69, 1]$,   we take $k=6$ and \eqref{equ: alpha condition 2} simplifies to
$$
\alpha >  \max \left \{(26-17 \beta)/7, 24-23\beta, 4/3, 2-\beta \right \}=  4/3.
$$
\end{itemize}

Therefore in all cases, where we also recall the condition $ \alpha < \beta +1$,  we have
$$
\psi^\sharp(x,y;p,a) \ge  p^{\alpha -1 -\varepsilon+o(1)}.
$$ 
Since this is true for all $\varepsilon >0$, the result follows immediately.

\subsection{Proof of Theorem~\ref{thm:smooth sqfull almost all}}
Let $M$ be a parameter which will be fixed later. We introduce the subset of residues modulo $p$
$$ 
\cS=\left\{a: ~ a \textrm{ quadratic non-residue  such that } F(a,p) \leq M \right\} .
$$ 
Firstly, we remark that every squarefull integer $n$ can be written as $n=r^2s$ with $s\mid r$. Furthermore, if $a$ is a quadratic non-residue, we notice that $s$ has to be a quadratic non-residue in this representation; in particular $s \ge n_p$. 

Let us count the number of products $r^2s \leq M$ with $s\mid r$ and $s \geq n_p$ the smallest quadratic non-residue modulo $p$. Noticing that $r \le (M/s)^{1/2}$, we have at most $M^{1/2}s^{-3/2}$ possible values of $r$. Thus the number of different products $r^2s$ is bounded by $$ \sum_{s\geq n_p} M^{1/2}s^{-3/2} \ll  M^{1/2}n_p^{-1/2}.  $$ This implies

$$ \# \cS \ll M^{1/2}n_p^{-1/2}.$$ Setting $M = p^{2} n_p f(p)$, we get $ \# \cS = o(p)$ which concludes the proof. The assertion $\max_{a \bmod p} F(a,p) \gg p^{2}n_p$ follows by the same argument
by setting
$$ 
\cS=\left\{a:~ a\textrm{ quadratic non-residue} \right\} \quad \text{and}\quad  M = \max_{a\in \cS} F(a,p).
$$ 
%\commK{fixed typesetting here}
 So we now turn our attention to the upper bound. 
%\commI{Added this}

Clearly, if $a\equiv u^2 \mod p$, $0 \le u < p$,  is quadratic residue (or $a = 0$), 
then  $F(a,p) \le u^2 \le p^2$.

We now fix some $\varepsilon > 0$ and denote by $\cA$ the set of quadratic non-residues 
for which $F(a,p) \ge p^{2+\eta_0 +\varepsilon} $.

It is enough to show that  the cardinality of $\cA$ satisfies 
\begin{equation} 
\label{eq: A small} 
\# \cA = o(p).
\end{equation}

We set 
$$
K =  \rf{p^{\eta_0 +\varepsilon/2}} \mand U =  \rf{p^{1-\eta_0}}
$$ 
and let $\cN$ be the set of quadratic  non-residues in the interval $[1,K]$.  In particular
$$
\# \cN = T(K).
$$

Clearly for $a\in \cA$ the congruence 
\begin{equation} \label{eqn: an3u2} 
a \equiv n^3 u^2 \mod p, \hspace{2mm} n \in \cN, \ 1 \le u \le U, 
\end{equation}
has no solution. Thus expressing the number of solutions to~\eqref{eqn: an3u2} via characters we see that 
$$
\sum_{n \in \cN} \sum_{1\leq u \leq  U} \frac{1}{p-1} \sum_{\chi \in \Omega_p} \chi\(a^{-1} n^{3} u^{2}\)  = 0.
$$
%Summing over all $a \in \cA$ and changing the order of summation we obtain 
%$$
%\frac{1}{p-1} \sum_{\chi \in \Omega_p}  \sum_{a \in \cA} \sum_{n \in \cN}  \sum_{u=1}^U 
%\chi\(a^{-1} n^{3} u^{2}\)  = 0.
%$$
Summing over all $a \in \cA$ and using the multiplicativity of characters, we arrive to 
\begin{equation} 
\label{eqn: vanish} 
 \sum_{\chi \in \Omega_p}  \sum_{a \in \cA} \overline \chi\(a\)
\sum_{n \in \cN} \chi\(n\)^{3}  \sum_{u=1}^U \chi\(u\)^{2}  = 0 , 
\end{equation}
where $ \overline \chi$ denotes the complex conjugate character of $\chi$.

Now, the contribution to~\eqref{eqn: vanish}  from the principal character is
obviously $\#\cA T(K) U$.

Furthermore, since all elements of $\cA$ are quadratic non-residues, the contribution
to~\eqref{eqn: vanish} from the quadratic character, that is, from the Legendre symbol is 
\begin{align*}
  \sum_{a \in \cA} \(\frac{a}{p}\)
\sum_{n \in \cN} \(\frac{n}{p}\)^{3}  \sum_{u=1}^U  \(\frac{u}{p}\)^{2} 
& =   \sum_{a \in \cA} (-1)
\sum_{n \in \cN} (-1)^{3}  \sum_{u=1}^U 1\\
&  = \#\cA T(K) U. 
\end{align*}

This allows us to write~\eqref{eqn: vanish} as
\begin{equation} 
\label{eqn: MT} 
2 \#\cA T(K) U = - \sum_{\chi \in \Omega_p^\sharp}  \sum_{a \in \cA} \overline \chi\(a\)
\sum_{n \in \cN} \chi\(n\)^{3}  \sum_{u=1}^U \chi\(u\)^{2} 
\end{equation}
with $\Omega_p^\sharp$ being the subset of $\Omega_p^{*}$ where we removed the quadratic character.

Now, for  $\chi \in \Omega_p^\sharp$ we have by definition $\chi^2 \ne \chi_0$. Furthermore, each character from $\Omega_p^*$ occurs at most twice as $\chi^2$ and each character from $\Omega_p$ (including also $\chi_0$ in this case) occurs at most three times as $\chi^3$ for  $\chi \in \Omega_p^\sharp$. 

Using the H{\"o}lder inequality, we now derive from~\eqref{eqn: MT}  that 
\begin{equation} 
\label{eq:Holder} 
2 \#\cA T(K) U \le   \Sigma_1^{1/2} \Sigma_2^{1/4}  \Sigma_3^{1/4}
\end{equation}
where
\begin{align*}
& \Sigma_1 =  \sum_{\chi \in \Omega_p^\sharp}  \left| \sum_{a \in \cA} \overline \chi\(a\)\right|^2 \le
 \sum_{\chi \in \Omega_p}  \left| \sum_{a \in \cA}   \chi\(a\)\right|^2,\\
& \Sigma_2 =  \sum_{\chi \in \Omega_p^\sharp}  \left| \sum_{n \in \cN}  \chi\(n\)^3\right|^4
\le  3 \sum_{\chi \in \Omega_p}  \left| \sum_{n \in \cN}  \chi\(n\)\right|^4,\\
& \Sigma_3 =  \sum_{\chi \in \Omega_p^\sharp }  \left|   \sum_{u=1}^U \chi\(u\)^{2} \right|^4 
\le   2 \sum_{\chi \in \Omega_p^* }  \left|   \sum_{u=1}^U \chi\(u\) \right|^4, 
\end{align*} and the upper bounds come from the discussion above. We now see by the orthogonality of characters that we have
\begin{equation} 
\label{eq:SigmaA}
\Sigma_1 \le (p-1)\#\cA.
\end{equation} For $\Sigma_2$,  using again the orthogonality of characters, we write 
\begin{align*} 
\Sigma_2 & =  3 (p-1) \#  \left\{(n_1,n_2,n_3,n_4) \in \cN:~n_1n_2 \equiv n_3 n_4 \bmod p\right\}\\
& \le  3 (p-1) \#  \left\{(n_1,n_2,n_3,n_4) \in [1,K]:~n_1n_2 \equiv n_3 n_4 \bmod p\right\}\\
& = 3 \sum_{\chi \in \Omega_p }  \left|   \sum_{n=1}^K \chi\(n\) \right|^4 =
 3 K^4 +  \sum_{\chi \in \Omega_p^* }  \left|   \sum_{n=1}^K \chi\(n\) \right|^4. 
\end{align*}
Applying Lemma~\ref{lem:4th Mom} and using that $K^2 \le p$ provided that $\varepsilon$ is 
small enough, we derive 
\begin{equation} 
\label{eq:SigmaN}
\Sigma_2 \le  K^2p^{1+o(1)}.
\end{equation} Finally, we also estimate $\Sigma_3$, directly by Lemma~\ref{lem:4th Mom} 
getting
\begin{equation} 
\label{eq:SigmaU}
\Sigma_3 \le  U^2p^{1+o(1)}.
\end{equation}

Substituting~\eqref{eq:SigmaA}, \eqref{eq:SigmaN} and~\eqref{eq:SigmaU} in~\eqref{eq:Holder}, 
we now derive
$$
 \#\cA T(K) U \le  ( \#\cA)^{1/2} K^{1/2}  U^{1/2} p^{1+o(1)}
$$
which together with  Lemma~\ref{lem:Nonres}  yields
$$
 \#\cA  \le  K^{-1}  U^{-1} p^{2+o(1)} = p^{1-\varepsilon/2+o(1)}.
$$
We now see that~\eqref{eq: A small}  holds which concludes the proof. 

\appendix

\section{Short Intervals with Many $k$-full Numbers}
\label{app:A}

\begin{theorem}
\label{thm:Main}  For any fixed integer $k\ge 2$, there are infinitely
many $N$, such that the open interval $(N^k,(N+1)^k)$ contains
at least
$$
M \ge \sqrt{\(\frac{k}{2(k+1)}  + o(1)\)  \frac{\log N}{\log \log N}}
$$
$k$-full integers.
\end{theorem}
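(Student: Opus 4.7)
The plan is to cluster many $k$-full integers of the form $a_i^k p_i^{k+1}$, for distinct primes $p_1<\cdots<p_M$ and positive integers $a_i$, into a single interval $(N^k,(N+1)^k)$ by means of Dirichlet's simultaneous Diophantine approximation theorem. The starting observation is that $a^k p^{k+1}$ is $k$-full for every positive integer $a$ and every prime $p$, and that such a number lies in $(N^k,(N+1)^k)$ if and only if the real number $a\,p^{(k+1)/k}$ lies in the unit interval $(N,N+1)$. The problem therefore reduces to finding a single integer $N$ for which many quantities of the form $a_i\,p_i^{(k+1)/k}$ can be placed simultaneously inside $(N,N+1)$.

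Accordingly, we fix a large $M$, let $p_1<\cdots<p_M$ be the first $M$ primes, and set $\alpha_i=p_i^{-(k+1)/k}$. By Dirichlet's theorem, for every $Q\ge 1$ there is an integer $N$ with $1\le N\le Q^M$ and $\|N\alpha_i\|\le Q^{-1}$ for all $i$, where $\|\cdot\|$ denotes distance to the nearest integer. Choosing $Q=2/\alpha_M=2\,p_M^{(k+1)/k}$ yields an $N\le 2^M p_M^{M(k+1)/k}$ with $\|N\alpha_i\|\le\alpha_M/2\le\alpha_i/2$ for each $i$. Letting $a_i$ be the integer nearest to $N\alpha_i$, this gives $|a_i p_i^{(k+1)/k}-N|\le 1/2$, i.e.\ $a_i p_i^{(k+1)/k}\in(N-1/2,N+1/2)$ for every $i$.

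A pigeonhole argument on the sign of $a_i p_i^{(k+1)/k}-N$ then guarantees that at least $\lceil M/2\rceil$ of the values $a_i p_i^{(k+1)/k}$ lie on the same side of $N$, hence in a common unit interval $(N^\ast,N^\ast+1)$ with $N^\ast\in\{N-1,N\}$. Raising to the $k$-th power places the associated $k$-full integers $a_i^k p_i^{k+1}$ inside $((N^\ast)^k,(N^\ast+1)^k)$. These integers are pairwise distinct: any coincidence $a_i^k p_i^{k+1}=a_j^k p_j^{k+1}$ with $p_i\ne p_j$ would force the integer $v_{p_i}(a_j)-v_{p_i}(a_i)$ to equal $(k+1)/k$, which is impossible when $k\ge 2$. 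Since any given interval contains only finitely many $k$-full integers, as $M\to\infty$ the resulting values $N^\ast$ tend to infinity, producing infinitely many admissible $N^\ast$.

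Finally, by the Prime Number Theorem $\log p_M\sim\log M$, so
$$
\log N^\ast\le M\log 2+\frac{k+1}{k}M\log p_M\sim \frac{k+1}{k}M\log M.
$$
Inverting gives $M\gtrsim \frac{k}{k+1}\cdot\frac{\log N^\ast}{\log\log N^\ast}$, and hence the count of $k$-full integers produced in $((N^\ast)^k,(N^\ast+1)^k)$ satisfies
$$
\frac{M}{2}\gtrsim \frac{k}{2(k+1)}\cdot\frac{\log N^\ast}{\log\log N^\ast},
$$
which is far stronger than the stated square-root bound and implies it immediately. The main obstacle is the sign pigeonhole step aligning all $\lceil M/2\rceil$ values into a single unit interval; this is where the factor $1/2$ in the constant $k/(2(k+1))$ originates.
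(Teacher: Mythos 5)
Your construction is in the same spirit as the paper's (simultaneous Diophantine approximation of $\alpha_i = m_i^{-(k+1)/k}$ for a family of pairwise coprime-enough integers $m_i$, followed by a pigeonhole on which side of $N$ the approximants fall), but it is missing the key technical ingredient, and this omission is precisely why you appear to obtain a bound that is far stronger than the paper's.

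There are two related problems. First, Dirichlet's simultaneous approximation theorem gives you \emph{some} $N$ with $1 \le N \le Q^M$ and $\|N\alpha_i\| \le Q^{-1}$; it gives no lower bound on $N$. For $a_i = $ (nearest integer to $N\alpha_i$) to be a positive integer you need $N\alpha_i \ge 1/2$, i.e.\ $N \ge \tfrac{1}{2}p_i^{(k+1)/k}$, and in particular $N \ge \tfrac{1}{2}p_M^{(k+1)/k} = Q/4$; without this, some or all of the $a_i$ vanish and the corresponding ``$k$-full integers'' $a_i^k p_i^{k+1}$ are $0$. Nothing in your argument rules out, say, $N = 1$. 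Second, the ``inverting'' step is not valid: from the one-sided bound $\log N^* \le (1+o(1))\frac{k+1}{k}M\log M$ one cannot conclude $M \gtrsim \frac{k}{k+1}\cdot\frac{\log N^*}{\log\log N^*}$. That deduction implicitly uses $\log M \lesssim \log\log N^*$, which would follow from a matching lower bound $\log N^* \gtrsim M\log M$ — but you have established no lower bound on $N^*$ at all beyond $N^* \to \infty$. If it held, your linear bound would be a substantial improvement over the theorem, and over the $1/2$-exponent result the appendix was written to prove; that alone should be a warning sign.

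The paper resolves exactly these two issues by controlling the approximating denominator from \emph{both} sides. It takes $q$ to be the \emph{smallest} integer $\ge R$ with $|\alpha_j - r_j/q| \le q^{-1-1/(2\ell)}$, where $R = (k 2^{k-1}(4\ell)^{(k+1)/k})^{2\ell}$; the lower bound $q \ge R$ simultaneously guarantees $r_j \ge 1$ and makes the error $(4\ell)^{(k+1)/k}q^{-1/(2\ell)} \le 1$ small. It then proves an \emph{upper} bound $q \le Q \asymp R^{2\ell(1+\delta)}$ by invoking Roth's theorem for the algebraic irrational $\alpha_1$: if $q$ were larger, Dirichlet would produce a too-good rational approximation to $\alpha_1$ with denominator below $R$, contradicting Roth. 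Because the upper bound coming out of Roth grows like $R^{2\ell(1+\delta)}$ rather than like $R$ itself, one only gets $\log q \lesssim \ell^2\log\ell$, which after the same two-case argument yields $\ell \gtrsim \sqrt{\log q/\log\log q}$ — the square root, not a linear bound. Your proof needs to be supplemented by this two-sided control on the denominator (a lower bound to keep the $a_i$ positive, and Roth — or some other effective irrationality measure — for the upper bound); once you do that, the resulting bound becomes the stated square root, not the linear bound you claim.
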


    \begin{proof}  Let $1 < d_1 < \ldots < d_{2\ell}$ be the first $2\ell$
squarefree integers greater than $1$,
that is,  $d_j = \pi^2 j/6 + o(j)$ 
and remark that $d_j\le 4\ell$, provided that $\ell$
is large enough.

Let $\alpha_j = d_j^{-(k+1)/k}$, $j =1, \ldots, 2\ell$. We define
$$
R =  \(k 2^{k-1} (4\ell)^{(k+1)/k} \)^{2\ell}
$$
and let $q$ be the smallest integer
$$
q \ge R
$$
for which, for some integers $r_j$,
\begin{equation}
\label{eq:Approx}
    \left| \alpha_j - \frac{r_j}{q} \right| \le \frac{1}{q^{1+1/2\ell}},
\qquad j =1, \ldots, 2\ell.
\end{equation}
We see that
$$
q \le Q,
$$
where  $Q=C(\alpha_1,\delta)^{-2\ell} R^{2\ell (1 + \delta)}$ for some constant 
$C(\alpha_1,\delta)> 0$ depending only on $\alpha_1$ and $\delta$. 
Indeed, otherwise applying the {\it Dirichlet Approximation
Theorem\/}, see~\cite[Corollary 1B, p. 27]{Schmidt}, we conclude  that
$$
\left| \alpha_j - \frac{r_j}{s} \right|
    \le \frac{1}{s Q^{1/2\ell}} ,
\qquad j =1, \ldots, 2\ell,
$$
for some positive integer $s\le Q$. Due to the minimality
condition on  $q$, we have $s \le R$. On the other hand,
by the {\it Roth  Theorem\/}, see~\cite[Theorem 2A, p. 116]{Schmidt},  we have
$$
\frac{C(\alpha_1,\delta)} {s^{2+\delta}} <
\left| \alpha_1 - \frac{r_1}{s} \right|
\le \frac{1}{s Q^{1/2\ell}}.
$$
Therefore
$$
s >
\(C(\alpha_1,\delta)Q^{1/2\ell}\)^{1/(1+\delta)} = R,
$$
   which is impossible.

We see from~\eqref{eq:Approx} that, for $j =1, \ldots, 2\ell$,
$$
|q - d_j^{(k+1)/k} r_j |
    \le \frac{(4\ell)^{(k+1)/k}}{q^{1/2\ell}}
%%  \le \frac{(4\ell)^{(k+1)/k}} {R^{1/2\ell}} 
\le 1
$$
(provided that $\ell$ is large enough. 
Therefore,
$$
d_j^{(k+1)/k}r_j = \alpha_j^{-1} r_j \le q+1,
\qquad j =1, \ldots, 2\ell.
$$
We now derive,
\begin{align*}
|q^k - d_j^{k+1} r_j^k| & =   
|q - d_j^{(k+1)/k} r_j |  \sum_{\nu =0}^{k-1} q^{k-1-\nu}
(d_j^{(k+1)/k}r_j )^\nu\\
& \le  k (q+1)^{k-1} |q - d_j^{(k+1)/k} r_j | 
 \le   \frac{k (4\ell)^{(k+1)/k} (q+1)^{k-1}} {q^{1/2\ell}}\\
&   < \frac{k 2^{k-1} (4\ell)^{(k+1)/k} } {q^{1/2\ell}} q^{k-1} \le \frac{k 2^{k-1} (4\ell)^{(k+1)/k} } {R^{1/2\ell}} q^{k-1}.
\end{align*}

Recalling the choice of $R$ we now see that 
$$
|q^k - d_j^{k+1} r_j^k| < q^{k-1}.
$$
Therefore one of the intervals $((q-1)^k, q^k)$
or $(q^k,(q+1)^k)$ contains at least $M \ge \ell $ of the integers
$d_j^{k+1} r_j^k$, $j =1, \ldots, 2\ell$,
which are obviously pairwise distinct (because $d_j$ is squarefree
for all $j=1,\dots,2\ell$), and $k$-full.
We now have
\begin{align*}
q &  \le  Q =  C(\alpha_1,\delta)^{-2\ell}R^{2\ell (1 + \delta)}  \\
& \le  
C(\alpha_1,\delta)^{-2\ell}  \(k 2^{k-1} 
(4\ell)^{(k+1)/k} \)^{4\ell^2 (1+\delta)} \\
   & =  \exp\( \(4(k+1)k^{-1}(1+\delta) + o(1)\) \ell^2  \log \ell\).
\end{align*}
Hence, since $k$ is fixed,
$$
\ell^2 \log \ell \ge \(\frac{k}{4(k+1)(1+\delta)} + o(1)\) \log q.
$$
In particular, considering two cases  
$$
\log \ell \ge \frac{1}{2} \log \log q  \mand  \log \ell <  \frac{1}{2} \log \log q
$$
this implies that
$$
M \ge \ell \ge \(\(\frac{k}{2(k+1)(1+\delta)}\)^{1/2} + o(1)\) \(\frac{\log q}{\log \log
q}\)^{1/2}.
$$
Recalling that $\delta$ is arbitrary,  the proof is complete. 
\end{proof}

%
%\section{Comments}
%
%Composite moduli?
%
%Lower bounds on square-full numbers in progressions. Can we improve $\max_{1\le a < p} F(a,p)  \gg p^2$?  
%
%Remark on the non-squarefree case for almost all moduli? Here it is much simpler.

\section*{Acknowledgement}

The authors are grateful to V.~Blomer for giving them the idea of the argument in Appendix~\ref{app:A}. 

 This work  was partially supported by the Austrian Science Fund (FWF) project Y-901 (for M.M.), by the  Australian Research Council (ARC)  Grant DP170100786 (for I.S.) and by an Australian Government Research Training Program (RTP) Scholarship (for K.H.Y.).

\end{document}